
\documentclass{elsarticle}

\usepackage{amsmath,amscd,amsthm}
\usepackage{amssymb}
\usepackage{mathrsfs}
\usepackage{pstricks}
\usepackage{pst-plot}
\usepackage[left=1.1875in,top=1in,text={6.5in,9in}]{geometry}
\usepackage{enumerate}
\usepackage{graphicx}



\newtheorem{theorem}{Theorem}
\theoremstyle{plain}
\newtheorem{lemma}[theorem]{Lemma}
\newtheorem{prop}[theorem]{Proposition}
\newtheorem{cor}[theorem]{Corollary}

\theoremstyle{definition}
\newtheorem{definition}[theorem]{Definition}
\theoremstyle{remark}

\newtheorem{rmk}[theorem]{Remark}
\newtheorem{example}[theorem]{Example}
\newtheorem*{acknowledgement*}{Acknowledgements}


\newcommand{\Z}{\mathbb{Z}}
\newcommand{\ds}{\displaystyle}


\begin{document}

\begin{frontmatter}

\title{Erd\H{o}s-Szekeres Tableaux}

\author[]{Shaun V. Ault}\ead{svault@valdosta.edu}

\author{Benjamin Shemmer\corref{cor1}}\ead{bshemmer@fordham.edu}

\address{Department of Mathematics and Computer Science\\
  Valdosta State University \\
  Valdosta, GA, 31602, USA. \\
  \bigskip
  Department of Mathematics \\
  Fordham University \\
  Bronx, New York, 10461, USA.}

\cortext[cor1]{Corresponding author}

\begin{abstract}
  We explore a question related to the celebrated Erd\H{o}s-Szekeres
  Theorem and develop a geometric approach to answer it.  Our main
  object of study is the {\it Erd\H{o}s-Szekeres tableau}, or EST, of
  a number sequence.  An EST is the sequence of integral points whose
  coordinates record the length of the longest increasing and longest
  decreasing subsequence ending at each element of the sequence.  We
  define the {\it Order Poset} of an EST in order to answer the
  question: {\it What information about the sequence can be recovered
    by its EST?}
\end{abstract}

\begin{keyword}
  Monotone Subsequence \sep Erd\H{o}s-Szekeres Tableau \sep Total and
  Partial Order

  \MSC[2010] 06A05 \sep 06A06
\end{keyword}

\end{frontmatter}


\section{Introduction}

There is a well-known result of Erd\H{o}s and Szekeres~\cite{ES} that
states:
\begin{theorem}[Erd\H{o}s-Szekeres]\label{thm.ES}
  Given a sequence of $n$ distinct real numbers, if $n
  > rs$, there exists a monotonically decreasing
  subsequence of length at least $r+1$ or a monotonically increasing
  subsequence of length at least $s+1$.
\end{theorem}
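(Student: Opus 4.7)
The plan is to prove this via a pigeonhole argument using the labeling that foreshadows the EST construction mentioned in the abstract. For the sequence $a_1, a_2, \ldots, a_n$, I would assign to each index $i$ a pair $(x_i, y_i)$, where $x_i$ is the length of the longest increasing subsequence of $a_1, \ldots, a_i$ that terminates at $a_i$, and $y_i$ is the length of the longest decreasing subsequence ending at $a_i$. Both coordinates are positive integers, and the strategy is to bound the region of $\Z_{>0} \times \Z_{>0}$ in which these $n$ labels can live.

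The key step is to show that the map $i \mapsto (x_i, y_i)$ is injective. Given indices $i < j$, distinctness of the real numbers forces $a_i < a_j$ or $a_i > a_j$. In the first case, any longest increasing subsequence ending at $a_i$ may be extended by appending $a_j$, giving $x_j \geq x_i + 1$; in the second, the analogous extension argument yields $y_j \geq y_i + 1$. Either way $(x_i, y_i) \neq (x_j, y_j)$, so the $n$ labels are pairwise distinct.

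To finish, I would argue by contraposition. Suppose no increasing subsequence has length at least $s+1$ and no decreasing subsequence has length at least $r+1$. Then $1 \leq x_i \leq s$ and $1 \leq y_i \leq r$ for every $i$, so all $n$ labels lie in the grid $\{1,\ldots,s\}\times\{1,\ldots,r\}$, which contains only $rs$ cells. Distinctness then forces $n \leq rs$, contradicting the hypothesis $n > rs$.

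The only spot I expect to require care is the extension step in the injectivity argument: one must use that the subsequence witnessing $x_i$ (resp.\ $y_i$) actually \emph{terminates} at position $i$, which is why the labels are defined with that endpoint condition rather than as the length of the longest monotone subsequence somewhere in the prefix. Beyond that technicality, the entire proof is a two-line pigeonhole, and the real content of the theorem is packaged into the geometric picture of the labels inhabiting an $s \times r$ rectangle---precisely the viewpoint the rest of the paper develops.
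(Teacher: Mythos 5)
Your proposal is correct and is essentially the paper's own proof (Seidenberg's argument): label each element by the lengths of the longest increasing and decreasing subsequences ending there, show the labels are pairwise distinct via the extension argument, and pigeonhole them into the $s \times r$ grid. No issues.
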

The original appears in the context of a geometric problem: {\it Can
  one find a function $N(n)$ such that any set containing $N(n)$
  distinct points in the plane has a subset of $n$ points that form a
  convex polygon?}  Thus, Theorem~\ref{thm.ES} may be considered a
work of {\it combinatorial geometry}, roughly defined as ``using
combinatorics to solve a problem of geometry.''  We would like to
contribute to the reverse direction of study, exploring what geometry
may tell us about certain problems in combinatorics.  For recent work
related to the Erd\H{o}s-Szekeres theorem, see~\cite{Myers,FPSS,EM}.

\subsection{Sequences and Monotone Subsequences}

We provide Seidenberg's proof of Theorem~\ref{thm.ES} in order to
introduce the main geometric construct with which we will be concerned.
  See~\cite{St} for alternate proofs.

\bigskip

\noindent {\it Theorem~\ref{thm.ES}} (Seidenberg~\cite{Seid})

\begin{proof}
Let $A = (a_1, a_2, \ldots, a_n)$ be a sequence of distinct real
numbers.  To each $a_i$ assign a pair of numbers $(a_i^+, a_i^-)$,
where $a_i^+$, {\it resp.}  $a_i^-$, is the length of the longest
increasing, {\it resp.}  decreasing, subsequence in $A$ that ends at
$a_i$.  We first show that for $i < j$, the pairs $(a_i^+, a_i^-)$ and
$(a_j^+, a_j^-)$ must be distinct.  Indeed, if $a_i < a_j$, then
$a_j^+ \geq a_i^+ + 1$, while if $a_i > a_j$, then $a_j^- \geq a_i^- +
1$.  This implies that there are $n$ distinct pairs.  Now suppose that
$n > rs$, for some natural numbers $r$ and $s$.  If it were the case
that all decreasing subsequences had length at most $r$ and all
increasing subsequences had length at most $s$, then at most $rs$ of
the pairs $(a_i^+, a_i^-)$ could be distinct.  Therefore, there is at
least one index $i$ such that either $a_i^- \geq r+1$ or $a_i^+ \geq
s+1$.
\end{proof}

\subsection{The Erd\H{o}s-Szekeres Tableau}

The geometric object of study in this paper is the sequence of the
points $\{(a_i^+, a_i^-)\}_{i \leq n}$ defined in the proof of
Theorem~\ref{thm.ES}.  Henceforth, the term {\it point} will always
refer to a first-quadrant lattice point $(x,y) \in (\Z^+)^2$.

\begin{definition}
  Given a sequence $A = (a_1, \ldots, a_n)$ of distinct numbers, the
  {\bf Erd\H{o}s-Szekeres tableau} (or, {\bf EST}) of $A$ is the
  sequence $T(A) = ( t_i )_{i \leq n}$ where $t_i = (a_i^+, a_i^-)$
  and $a_i^+$, {\it resp.}  $a_i^-$, is the length of the longest
  increasing, {\it resp.}  decreasing, subsequence in $A$ that ends at
  $a_i$.  An EST may be visualized as the set of points $\{(a_i^+,
  a_i^-) \;|\; 1 \leq i \leq n\}$ together with arrows or number
  labels that indicate their order in the sequence.  Given an EST $T=(
  t_i )_{i \leq n}$ and a number $m \leq n$, the subsequence $( t_i
  )_{i \leq m}$ is called a sub-EST of $T$ and denoted $T_m$.
\end{definition}

The points of an EST must be distinct, as discussed in the proof of
Theorem~\ref{thm.ES}.

\begin{example}\label{ex.EST}
  Let $A = (1,3,8,5,7,4,6,2)$.  Then $T(A) = ( (1,1), (2,1), (3,1),
  (3,2), (4,2), (3,3), (4,3), (2,4))$ (see
  Fig.~\ref{fig.EST_example}).
\end{example}

\begin{figure}[!ht]
  \centering
  \psset{unit=0.8cm}
  \begin{pspicture}(0,-1)(5,5)
    \psaxes[Dx=1,Dy=1,linecolor=black,linewidth=1pt]{->}(0,0)(0,0)(5,5)
    \psdot(1,1)
    \psdot(3,2)
    \psdot(2,1)
    \psdot(4,2)
    \psdot(3,3)
    \psdot(3,1)
    \psdot(4,3)
    \psdot(2,4)
    \psline[arrowscale=2]{->}(1,1)(2,1)
    \psline[arrowscale=2]{->}(2,1)(3,1)
    \psline[arrowscale=2]{->}(3,1)(3,2)
    \psline[arrowscale=2]{->}(3,2)(4,2)
    \psline[arrowscale=2]{->}(4,2)(3,3)
    \psline[arrowscale=2]{->}(3,3)(4,3)
    \psline[arrowscale=2]{->}(4,3)(2,4)
  \end{pspicture}
  \caption{EST for $A = (1,3,8,5,7,4,6,2)$.}\label{fig.EST_example}
\end{figure}
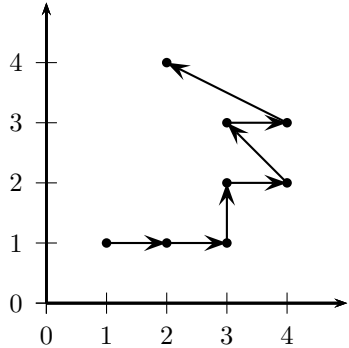

\begin{definition}
  The set of all ESTs of size $n$ will be denoted $\mathcal{T}_n$.
\end{definition}

As evidenced by Example~\ref{ex.EST}, the structure of a typical EST
is interesting and non-trivial.  In what follows, we will give a
concrete description of which sequences of points constitute a
valid EST, and conversely, which number sequences $A = (a_i)_{i \leq
  n}$ can produce a given EST.  The correspondence $A \to T(A)$ is not
one-to-one.  For example, both $A = (2, 1, 4, 3)$ and $B = (3, 1, 4,
2)$ have the same EST.  Our primary goal is to determine a necessary
and sufficient set of relations among the elements of $A$ such that
$T(A)$ is determined and to classify all sequences $B$ such that $T(B)
= T(A)$.  A secondary goal is to understand the size of
$\mathcal{T}_n$.

Certain structures in an EST prove to be important indicators of which
sequences of points are, in fact, ESTs.  The following terms will
apply to any sequence of points, whether it corresponds to an EST or
not.
\begin{definition}
  Let $T = (t_i)_{i \leq n} = \left((x_i, y_i)\right)_{i \leq n}$ be a
  sequence of distinct points.
  \begin{enumerate}
    \item The {\bf column above $t_i$} is defined by
      $\mathrm{Col}(t_i) = \mathrm{Col}(x_i,y_i) = \{ (x_i, y) \;|\; y
      > y_i\}$.
    \item The {\bf row beyond $t_i$} is defined by $\mathrm{Row}(t_i)
      = \mathrm{Row}(x_i,y_i) = \{ (x, y_i) \;|\; x > x_i\}$.
    \item The {\bf shadow} of $T$ is the set of points, $\ds{S(T) =
      \bigcup_{i \leq n} \left(\{ (x, y_i) \;|\; x \leq x_i\} \cup \{
      (x_i, y) \;|\; y \leq y_i\}\right)}$.
    \item The {\bf wall} of $T$ is the set of points, $\ds{W(T) =
      \left(S(T) + \{(1,0), (0,1)\}\right) \setminus S(T)}$, if $|T| >
      0$, and $\{ (1,1) \}$ if $|T| = 0$.
    \item An {\bf edge point} of $T$ is a member of the set, $E(T) =
      \{ t \in T \;|\; \mathrm{Col}(t) \cap T = \emptyset \vee
      \mathrm{Row}(t) \cap T = \emptyset\}$.
   \item A {\bf corner point} of $T$ is a member of the set,
      \[
        C(T) = \{ (x, y) \in E(T) \;|\; (x + 1, y) \notin S(T) \wedge
        (x, y + 1) \notin S(T)\}.
      \]
      When applied to $T_i$, we use the convenient notation: $S_i =
      S(T_i)$, $W_i = W(T_i)$, etc.
  \end{enumerate}
\end{definition}

\begin{example}
  Consider the Erd\H{o}s-Szekeres tableau $T = T(A)$ of
  Example~\ref{ex.EST}.  The sets $S(T)$, $W(T)$ and $C(T$) are
  depicted in Fig.~\ref{fig.shadow_wall_example}.  The shadow is
  represented by the light gray region (although only the points with
  positive integer coordinates within that region are part of the
  shadow), the wall by the dark gray regions (again, only the integral
  points), the edge points by large dots, and the
  corner points (which are also edge points) by large
  open dots.
\end{example}

\begin{figure}[!ht]
  \centering
  \psset{unit=0.8cm}
  \begin{pspicture}(0,-1)(5.5,6)
    \psframe*[linecolor=lightgray](0.5,0.5)(4.5,3.5)
    \psframe*[linecolor=lightgray](0.5,0.5)(2.5,4.5)
    \rput(1.5,2.5){$S(T)$}
    \psframe*[linecolor=gray](0.5,4.5)(2.5,5.5)
    \psframe*[linecolor=gray](2.5,3.5)(4.5,4.5)
    \psframe*[linecolor=gray](4.5,0.5)(5.5,3.5)
    \rput(2.8,5){$W(T)$}
    \psaxes[Dx=1,Dy=1,linecolor=black,linewidth=1pt]{->}(0,0)(0,0)(5.5,5.5)
    \psdot(2,1)
    \psdot(3,2)
    \psdot[linewidth=2pt](4,3)
    \psdot[linewidth=2pt](2,4)
    \psdot[linewidth=2pt](1,1)
    \psdot[linewidth=2pt](3,3)
    \psdot[linewidth=2pt](4,2)
    \psdot[linewidth=2pt](3,1)
    \psline[arrowscale=2]{->}(1,1)(2,1)
    \psline[arrowscale=2]{->}(2,1)(3,1)
    \psline[arrowscale=2]{->}(3,1)(3,2)
    \psline[arrowscale=2]{->}(3,2)(4,2)
    \psline[arrowscale=2]{->}(4,2)(3,3)
    \psline[arrowscale=2]{->}(3,3)(4,3)
    \psline[arrowscale=2]{->}(4,3)(2,4)
    \psdot[linewidth=0.25pt](1,1)
    \psdot[linewidth=0.25pt](1,2)
    \psdot[linewidth=0.25pt](1,3)
    \psdot[linewidth=0.25pt](1,4)
    \psdot[linewidth=0.25pt](2,1)
    \psdot[linewidth=0.25pt](2,2)
    \psdot[linewidth=0.25pt](2,3)
    \psdot[linewidth=0.25pt](2,4)
    \psdot[linewidth=0.25pt](3,1)
    \psdot[linewidth=0.25pt](3,2)
    \psdot[linewidth=0.25pt](3,3)
    \psdot[linewidth=0.25pt](4,1)
    \psdot[linewidth=0.25pt](4,2)
    \psdot[linewidth=0.25pt](4,3)
    \psdot[linewidth=0.25pt](1,5)
    \psdot[linewidth=0.25pt](3,4)
    \psdot[linewidth=0.25pt](2,5)
    \psdot[linewidth=0.25pt](4,4)
    \psdot[linewidth=0.25pt](5,3)
    \psdot[linewidth=0.25pt](5,2)
    \psdot[linewidth=0.25pt](5,1)
    \psdots[linewidth=1.5pt,linecolor=white](2,4)
    \psdots[linewidth=1.5pt,linecolor=white](4,3)
  \end{pspicture}
  \caption{$S(T)$, $W(T)$, $E(T)$ and $C(T)$ for $T =
    T((1,3,8,5,7,4,6,2))$.}
  \label{fig.shadow_wall_example}
\end{figure}
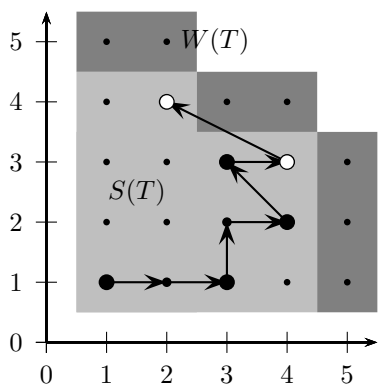

The following results follow from the definitions and will become
useful in later sections. The proofs are left as an exercises for the
reader.
\begin{lemma}\label{lem.structure_lemmas}
  \begin{enumerate}[(a)] For any $T \in \mathcal{T}_n$,
    \item $T_1 \subset T_2 \subset \cdots \subset T_n$ and $S_1
      \subset S_2 \subset \cdots \subset S_n$
    \item $C_n \subseteq E_n \subseteq E_{n-1} \cup \{t_n\} \subseteq
      T_n \subseteq S_n$
    \item $S_n \cap W_n = \emptyset$.
    \item $|W_n| = |E_n| + 1$.
  \end{enumerate}
\end{lemma}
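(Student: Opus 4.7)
Parts (a), (b), and (c) follow directly from the definitions. For (a), note that $T_i$ is by construction a prefix of $T_{i+1}$, and $S_i$ is a union of ``L-shapes'' indexed by $j \leq i$, so both grow with $i$. For (b), $C_n \subseteq E_n$ is part of the definition of $C_n$; if $t_i \in E_n$ with $i < n$, then at least one of $\mathrm{Col}(t_i)$, $\mathrm{Row}(t_i)$ is disjoint from $T_n$, hence from $T_{n-1} \subseteq T_n$, so $t_i \in E_{n-1}$; and $E_{n-1} \cup \{t_n\} \subseteq T_n \subseteq S_n$ is immediate since every $t_i$ lies in its own L-shape. Part (c) is built into the definition of $W_n$ as a set difference with $S_n$.

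The substantive work is in (d). I plan to proceed by induction on $n$, with base case $n = 1$ giving $|W_1| = 2 = |E_1| + 1$. The key input for the inductive step is that $t_{n+1}$ always lies in $W_n$; this can be extracted from Seidenberg's argument. Indeed, for any $j \leq n$, the pair $(x_{n+1}, y_{n+1})$ differs from $(x_j, y_j)$ strictly in at least one coordinate ($a_j < a_{n+1}$ forces $x_{n+1} > x_j$, and $a_j > a_{n+1}$ forces $y_{n+1} > y_j$), which rules out $t_{n+1} \in S_n$. Furthermore, the penultimate term of a longest monotone subsequence ending at $a_{n+1}$ witnesses that $(x_{n+1}-1, y_{n+1})$ or $(x_{n+1}, y_{n+1}-1)$ lies in $S_n$, placing $t_{n+1}$ in $W_n$.

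With this in hand, a short case analysis on whether $t_{n+1}$ is a convex corner of the NE staircase of $S_n$ or only a one-sided wall (together with boundary cases $x_{n+1} = 1$ or $y_{n+1} = 1$) shows $\Delta|W| = \Delta|E|$. The accounting goes: $t_{n+1}$ leaves $W_n$ and joins $E_{n+1}$ (joining because $t_{n+1} \notin S_n$ forces $\mathrm{Col}(t_{n+1})$ and $\mathrm{Row}(t_{n+1})$ to be empty of $T_n$); at most one old edge point (the one-sided col-topmost of column $x_{n+1}$, or row-rightmost of row $y_{n+1}$, in $T_n$) is blocked and leaves $E$; and each further old wall point absorbed into $S_{n+1}$ is matched by a new wall point appearing one step further NE along the staircase. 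The main obstacle is this bookkeeping, which rests on $S_n$ being downward-closed (Young-diagram-shaped)---a property provable in the same induction using $t_{n+1} \in W_n$, since the new L-shape then adds only points along the existing NE boundary, leaving the staircase structure intact.
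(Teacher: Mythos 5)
The paper offers no proof of this lemma (it is explicitly ``left as an exercise''), so there is nothing to match your approach against; judged on its own merits, your treatment of (a)--(c) is fine, and your overall plan for (d) --- induct on $n$, use $t_{n+1}\in W_n$ (No-Backward-Placement plus a penultimate-element witness), and exploit the fact that $S_n$ is a downward-closed staircase --- is a sound and workable route. One small caveat on the wall-membership step: a \emph{single} penultimate term does not by itself witness that $(x_{n+1}-1,y_{n+1})$ or $(x_{n+1},y_{n+1}-1)$ lies in $S_n$; the LIS-penultimate $t_i$ has $x_i=x_{n+1}-1$ but may satisfy $y_i<y_{n+1}$, in which case it witnesses nothing. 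You need to compare the two penultimates (of the longest increasing and longest decreasing subsequences ending at $a_{n+1}$): whichever occurs later in the sequence supplies the witness, by the Seidenberg inequalities. This is exactly the point the paper handles with the hook relation in its proof of the Structure Theorem.

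The genuine gap is in the bookkeeping for (d), which is the entire content of that part. Your ledger claims that at most one old edge point is blocked; that is false. Take $A=(3,4,5,2,1,3.5)$, so $T_5=\{(1,1),(2,1),(3,1),(1,2),(1,3)\}$ and $t_6=(2,2)$. Then $(2,1)$ (topmost in column $2$, with $(3,1)$ to its right) and $(1,2)$ (rightmost in row $2$, with $(1,3)$ above it) \emph{both} lose edge status when $t_6$ arrives, while only $t_6$ joins $E$; so $\Delta|E|=-1$, which your accounting (predicting $\Delta|E|\in\{0,+1\}$) cannot produce, even though the identity itself survives since $|W_5|=5$, $|E_5|=4$, $|W_6|=4$, $|E_6|=3$. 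The matching claim for the wall also fails in the simplest case: for $T_1=\{(1,1)\}$ and $t_2=(2,1)$, no old wall point other than $t_2$ is absorbed into $S_2$, yet two new wall points $(2,2)$ and $(3,1)$ appear, so $\Delta|W|=+1$ while your ledger gives $-1$. The repair is to organize the case analysis by which of $(x_{n+1}-1,y_{n+1})$ and $(x_{n+1},y_{n+1}-1)$ lie in $S_n$ (i.e., whether $t_{n+1}$ caps a column, extends a row, or sits at an outer corner of the staircase) and in each case list exactly which wall points are created or absorbed and which edge points are blocked; alternatively, skip the increment argument for (d) entirely and, using the staircase structure you already establish, compute both quantities from the column heights $h_1\geq\cdots\geq h_M$ of $S_n$: one finds $|W_n|=M+m-D$ and $|E_n|=M+m-D-1$, where $D$ is the number of strict descents among consecutive column heights, since the outer corners of the staircase are exactly the corner points of $T$. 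As it stands, the central counting step would not go through.
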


Furthermore, there is a natural duality relationship concerning
sequences and their ESTs.  Let $A = (a_i)_{i \leq n}$ be a sequence of
real numbers, and define the dual of $A$ by $A^* = (-a_i)_{i \leq n}$.
If we specialize to sequences in $\{1, 2, \ldots, n\}$, then we may
define $A^* = (n+1-a_i)_{i \leq n}$. This is the same type of duality
as Myers uses in~\cite{Myers}.
\begin{definition}
  Given $T = T(A)$, the {\bf dual} EST of $T$ is $T^* = T(A^*)$.
\end{definition}
It should be clear that $T^*$ is obtained by reflecting the points of
$T$ across the diagonal line $y = x$, thus rows are exchanged with
columns and vice versa.

\subsection{The Order Poset}\label{sub.order_poset}

In this note, we do not concern ourselves with the actual numeric
values of $a_i$ in a sequence $A = (a_i)_{i \leq n}$.  Indeed, we only
are concerned with the relative order of the $a_i$.  That is, we
regard a sequence $A$ as a linear order on the set of indices, $( [n],
<_A)$, in which $i <_A j$ if and only if $a_i < a_j$ (here, $[n] =
\{1, 2, \ldots, n\}$).  It is natural to ask what order information is
preserved in $T(A)$.  We define an equivalence relation on the set of
sequences (linear orders of $[n]$) by $\{ (A, B) \;|\; T(A) = T(B)
\}$. For $T\in\mathcal{T}_n$, define $[T]=\{A:T(A)=T\}$.  Thus, to
answer what information about $A$ is preserved in $T(A)$, we should
examine the order relations common to all $B \in [T(A)]$.

\begin{definition}
  Let $T \in \mathcal{T}_n$.  Define a partial order $P(T) =
  \left([n], <_{P(T)}\right)$ by $i <_{P(T)} j$ if and only if $i <_A
  j$ for all sequences $A \in [T]$.  The poset $P(T)$ is called the
  {\bf order poset} of $T$.
\end{definition}

Certain relations in $P(T)$ are immediate from the configuration of
points of $T$.  It may come as no surprise that rows and columns of an
EST provide information about the order of the sequence that generated
the EST.

\begin{lemma}[Row and Column Relations]\label{lem.row_col_relations}
   Let $A = (a_i)_{i \leq n}$ be any sequence and $T(A) = (t_i)_{i
     \leq n}$.
   \begin{enumerate}[(a)]
     \item (Row Relation) If $t_j \in \mathrm{Row}(t_i)$, then $a_i <
       a_j$.
     \item (Column Relation) If $t_j \in \mathrm{Col}(t_i)$, then $a_i
       > a_j$.
   \end{enumerate}
\end{lemma}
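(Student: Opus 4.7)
\medskip

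\noindent\textbf{Proof proposal.} The plan is to read off what membership in a row or column says about the coordinates, and then invoke the same monotonicity argument used in Seidenberg's proof of Theorem~\ref{thm.ES} to rule out the wrong order. Write $t_i=(a_i^+,a_i^-)$ and $t_j=(a_j^+,a_j^-)$. The condition $t_j\in\mathrm{Row}(t_i)$ is precisely $a_j^-=a_i^-$ and $a_j^+>a_i^+$, and $t_j\in\mathrm{Col}(t_i)$ is precisely $a_j^+=a_i^+$ and $a_j^->a_i^-$.

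The engine of the proof is the following observation, which was the key step of Seidenberg's argument and which I would isolate once at the start: if $i<j$ in the sequence, then $a_i<a_j$ forces $a_j^+\geq a_i^++1$ (prepend the optimal increasing subsequence ending at $a_i$), and $a_i>a_j$ forces $a_j^-\geq a_i^-+1$. By symmetry (swapping roles), if $j<i$, then $a_j<a_i$ forces $a_i^+\geq a_j^++1$, and $a_j>a_i$ forces $a_i^-\geq a_j^-+1$.

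For part (a), I would suppose $t_j\in\mathrm{Row}(t_i)$ and split on whether $i<j$ or $j<i$ in the sequence. If $i<j$, then the assumption $a_i>a_j$ would yield $a_j^-\geq a_i^-+1$, contradicting $a_j^-=a_i^-$; hence $a_i<a_j$, as required. If instead $j<i$, then $a_j<a_i$ would force $a_i^+\geq a_j^++1$, contradicting $a_j^+>a_i^+$, while $a_j>a_i$ would force $a_i^-\geq a_j^-+1$, again contradicting $a_j^-=a_i^-$. Thus the case $j<i$ is actually vacuous, and the conclusion $a_i<a_j$ follows. Part (b) is handled by the same case split, with the two cases exchanging which contradiction kills which ordering.

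The argument is essentially a direct reuse of Seidenberg's inequality, so no serious obstacle arises. The only point that requires care is bookkeeping the four sub-cases given by the two orderings of $i,j$ against the two possible orderings of $a_i,a_j$; in particular, it is worth noting as a by-product of the argument that $t_j\in\mathrm{Row}(t_i)$ forces $i<j$ and $t_j\in\mathrm{Col}(t_i)$ forces $i<j$ as well, a fact that may be convenient when invoking the lemma later.
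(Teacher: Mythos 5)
Your proposal is correct and uses essentially the same argument as the paper: the Seidenberg-type inequalities obtained by extending a longest monotone subsequence, applied to rule out the forbidden ordering. The only cosmetic differences are that you organize the cases by index order (thereby also observing that $t_j\in\mathrm{Row}(t_i)$ forces $i<j$, which is true) and handle the column relation by repeating the symmetric argument, whereas the paper invokes duality in $T^*$.
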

\begin{proof}
  We need only prove the row relation, as the column relation would
  follow by duality in $T^*$.  Suppose that $a_i > a_j$.  Since $t_j
  \in \mathrm{Row}(t_i)$, we have $a_i^+ < a_j^+$ and $a_i^- = a_j^-$.
  Now if $a_j$ precedes $a_i$ (that is, $j < i$), then by appending
  $a_i$ to the longest increasing subsequence ending at $a_j$, we find
  $a_i^+ > a_j^+$, a contradiction.  On this other hand, if $a_i$
  precedes $a_j$ (that is, $i < j$), we may append $a_j$ to the
  longest decreasing subsequence ending at $a_i$, which would imply
  $a_j^- > a_i^-$, another contradiction.  Thus, $a_i < a_j$ is
  forced.
\end{proof}

The following result generalizes the row and column relations.
Because these relations may involve a turn around an empty column or
row, they are given the suggestive names {\it hook} and {\it slice}.
\begin{lemma}[Hook and Slice Relations]\label{lem.hook_slice}
   Let $A = (a_i)_{i \leq n}$ be any sequence and $T = T(A)$.  Write
   $T = (t_i) = ((x_i, y_i))$.
   \begin{enumerate}[(a)]
     \item (Hook Relation) Suppose $i < j$.  If there is a number $x$
       such that $x_i \leq x \leq x_j$ and $\mathrm{Col}(x,y_i) \cap
       T_j = \emptyset$, then $a_i < a_j$.
     \item (Slice Relation) Suppose $i < j$. If there is a number $y$
       such that $y_i \leq y \leq y_j$ and $\mathrm{Row}(x_i,y) \cap
       T_j = \emptyset$, then $a_i > a_j$.
   \end{enumerate}
\end{lemma}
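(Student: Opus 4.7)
My plan is to prove only the Hook Relation and derive the Slice Relation by duality. The map $A \mapsto A^* = (-a_i)_{i\le n}$ sends $T$ to $T^*$, reflects points across $y=x$, swaps rows with columns, and reverses every strict comparison; applying Hook to $T^*$ with the hypothesis-parameter $y$ playing the role that $x$ played before then yields exactly the Slice conclusion for $T$.

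For Hook, I argue by contradiction and assume $a_i > a_j$. Since $i<j$, appending $a_j$ to the longest decreasing subsequence ending at $a_i$ forces $y_j \ge y_i+1$. The key structural observation is that the longest \emph{increasing} subsequence ending at $a_j$, say $a_{\ell_1} < a_{\ell_2} < \cdots < a_{\ell_{x_j}} = a_j$ with indices $\ell_1 < \cdots < \ell_{x_j} = j$, has $x$-coordinates exactly $1,2,\ldots,x_j$. Indeed, $x_{\ell_p} \ge p$ because the prefix is itself an increasing chain of length $p$ ending at $a_{\ell_p}$, and if $x_{\ell_p} > p$ were possible then a witnessing chain for $a_{\ell_p}^+$ could be extended by $a_{\ell_{p+1}}, \ldots, a_{\ell_{x_j}}$ to give $a_j^+ > x_j$, a contradiction.

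Because $x_i \le x \le x_j$, this chain contains a unique term $a_\ell$ with $x_\ell = x$, and $\ell \le j$. If $x = x_j$ then $\ell = j$, and $\mathrm{Col}(x,y_i)\cap T_j=\emptyset$ directly forces $y_j \le y_i$, contradicting $y_j > y_i$. Otherwise $a_\ell < a_j$, and the same hypothesis applied to $t_\ell \in T_j$ yields $y_\ell \le y_i$. A case split on the relative order of $\ell$ and $i$ now closes the argument: if $\ell = i$ then $a_i = a_\ell < a_j$ outright; if $\ell < i$, then $a_\ell < a_i$ would give $x_i \ge x+1$ (contradicting $x \ge x_i$) while $a_\ell > a_i$ would give $a_j > a_\ell > a_i$; if $\ell > i$, then $a_\ell < a_i$ would give $y_\ell \ge y_i+1$ (contradicting $y_\ell \le y_i$) while $a_\ell > a_i$ again gives $a_j > a_\ell > a_i$. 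Every branch contradicts $a_i > a_j$, so $a_i < a_j$.

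The main obstacle is choreographing this case analysis so that each branch invokes the correct monotone-extension inequality; no single step is deep, but one must be careful that the empty-column hypothesis is applied only to points of $T_j$ (which is exactly why the observation $\ell \le j$ matters) and that the correct coordinate bound is leveraged in each case. Once the $x$-coordinate fact for the increasing chain is in place, the Row and Column Relations of Lemma~\ref{lem.row_col_relations}, together with direct extension of monotone subsequences, dispatch every sub-case in one line each.
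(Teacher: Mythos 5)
Your proof is correct and follows essentially the same route as the paper: assume $a_i>a_j$, locate the element of a longest increasing subsequence ending at $a_j$ whose first coordinate equals $x$, and derive a contradiction either with $x\geq x_i$ or with the emptiness of $\mathrm{Col}(x,y_i)$ in $T_j$, then get the slice relation by duality in $T^*$. The only difference is cosmetic: you explicitly prove the fact (which the paper uses tacitly) that the $p$-th term of such a chain satisfies $a_{\ell_p}^+=p$, and you split the final contradiction into a few more cases than the paper's two.
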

\begin{proof}
  Suppose $i < j$ and we have a point $(x, a_i^-)$ such that $a_i^+
  \leq x \leq a_j^+$ and $\mathrm{Col}(x, a_i^-) \cap T_j =
  \emptyset$.  Suppose that $a_i > a_j$.  Let $(a_{k_1}, a_{k_2},
  \ldots, a_j)$ be an increasing subsequence
  ending at $a_j$ of length $a_j^+$.  Since $x \leq a_j^+$, we
  have $k_x \leq j$.  Suppose that $a_{k_x}$ preceded $a_i$.  Then,
  since $a_{k_x} \leq a_j < a_i$, we have $a_i^+ > a_{k_x}^+ = x$, a
  contradiction.  Thus $a_i$ must precede $a_{k_x}$.  But then
  $a_{k_x}^- > a_i^-$, which puts $a_{k_x} \in \mathrm{Col}(x, a_i^-)
  \cap T_{j}$, another contradiction.  Hence the statement $a_i > a_j$
  is false.  The slice relation follows from the hook relation in
  $T^*$ by duality.
\end{proof}

For example, in Fig.~\ref{fig.EST_example}, $t_2$ hooks $t_4$
since $\mathrm{Col}(2,1) \cap T_4 = \emptyset$ (the point $t_8 =
(2,4)$ belongs to $T_8$, not $T_4$, so the column is considered empty
in $T_4$). Therefore, $b_2 < b_4$ for {\it any} sequence $B$ that
generates this EST.  Also, $t_4$ slices $t_6$ since
$\mathrm{Row}(3,3) \cap T_6 = \emptyset$.  This implies $b_4 > b_6$
for any $B$.  Moreover, $t_4$ hooks $t_5$ since
$\mathrm{Col}(3,3) \cap T_5 = \emptyset$, implying $b_4 < b_5$, which
illustrates that the hooks and slices do in fact generalize the row
(and column) relations.

\begin{cor}\label{cor.hook_slice_nec}
  Let $T \in \mathcal{T}_n$, and $P = P(T)$ be its order poset.  If
  $t_i$ hooks $t_j$, then $i <_P j$.  If $t_i$ slices $t_j$, then $i
  >_P j$.
\end{cor}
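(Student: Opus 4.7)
The corollary is really just a translation of Lemma~\ref{lem.hook_slice} into the language of the order poset $P(T)$, so the plan is to unwind the definitions and apply that lemma pointwise.

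First I would fix $T \in \mathcal{T}_n$ and an arbitrary sequence $A = (a_i)_{i \leq n} \in [T]$, meaning $T(A) = T$. The hook and slice predicates are defined purely in terms of the point configuration $T$ (positions $x_j, y_j$, indices $i<j$, emptiness of certain columns or rows in $T_j$), and therefore depend only on $T$, not on the choice of $A$. So if $t_i$ hooks $t_j$ in $T$, the same hook condition holds when we view $T$ as $T(A)$.

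Next, I would invoke Lemma~\ref{lem.hook_slice}(a) directly: since $t_i$ hooks $t_j$ in $T = T(A)$, the lemma yields $a_i < a_j$, equivalently $i <_A j$. Because $A$ was arbitrary in $[T]$, this inequality holds for every $A \in [T]$. The definition of $P(T)$ then gives $i <_{P(T)} j$, which is the conclusion $i <_P j$.

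The argument for slicing is identical, applying part (b) of Lemma~\ref{lem.hook_slice} instead, giving $a_i > a_j$ for every $A \in [T]$ and hence $i >_P j$. (Alternatively, one could reduce the slice statement to the hook statement via the duality $T \leftrightarrow T^*$ noted in the previous proof.) There is no real obstacle here; the entire content is observing that the hook/slice relations are features of $T$ alone, so a conclusion about $a_i, a_j$ that holds for every $A\in [T]$ immediately gives a relation in $P(T)$.
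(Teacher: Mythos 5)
Your proof is correct and is exactly the argument the paper intends: the corollary is stated without proof precisely because Lemma~\ref{lem.hook_slice} applies to every sequence $A$ with $T(A)=T$, and quantifying over all $A\in[T]$ is the definition of $<_{P(T)}$. Nothing is missing.
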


\section{Statement of the Main Theorems}

In this section we state some theorems regarding the structure of
ESTs.  Proofs are deferred to
Sections~\ref{sec.structure}--\ref{sec.quant}.  The first theorem
classifies the configuration of ESTs, and answers the question, {\it
  Under what conditions is a sequence of points an EST?}
\begin{theorem}[EST Structure Theorem]\label{thm.EST_structure}
  A sequence of points, $T = (t_i)_{i \leq n}$, is an EST if and only
  if for each $1 \leq i \leq n$, $t_i \in W(T_{i-1})$.
\end{theorem}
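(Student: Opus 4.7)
My plan is to prove the biconditional in two directions, using the defining recursion for $a_i^\pm$ in the forward direction, and induction with a ``slot'' parametrization in the reverse direction.

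\emph{Forward direction} ($T = T(A) \Rightarrow t_i \in W(T_{i-1})$): I would verify the two defining conditions of the wall. \textbf{(1)} To show $t_i \notin S(T_{i-1})$, suppose toward contradiction that $t_i$ lies in the shadow of some $t_j$ with $j < i$. By distinctness of EST points, this forces either $y_j = y_i$ with $x_i < x_j$, or $x_j = x_i$ with $y_i < y_j$. Using the observation from the proof of Theorem~\ref{thm.ES} (namely, $j < i$ with $a_j < a_i$ gives $a_i^+ > a_j^+$, while $a_j > a_i$ gives $a_i^- > a_j^-$), each subcase contradicts the coordinate relations above, regardless of the order of $a_i$ vs.\ $a_j$. \textbf{(2)} To show $t_i - (1,0) \in S(T_{i-1})$ or $t_i - (0,1) \in S(T_{i-1})$: the case $i = 1$ is immediate since $t_1 = (1,1)$ and $W(T_0) = \{(1,1)\}$. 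For $i \geq 2$ with $x_i, y_i \geq 2$, let $b$ (resp.\ $c$) be the penultimate element of a longest increasing (resp.\ decreasing) subsequence ending at $a_i$, so that $a_b < a_i < a_c$, $a_b^+ = x_i - 1$, $a_c^- = y_i - 1$, and $b, c < i$. If $b < c$, then $a_b < a_c$ and extending the LIS ending at $a_b$ by $a_c$ yields $a_c^+ \geq x_i$; hence $t_c$ lies in row $y_i - 1$ with $x_c \geq x_i$, placing $(x_i, y_i - 1) \in S(T_{i-1})$. The case $c < b$ is symmetric, placing $(x_i - 1, y_i) \in S(T_{i-1})$. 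The boundary cases ($x_i = 1$ or $y_i = 1$, $i \geq 2$) use only the single available predecessor.

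\emph{Reverse direction} (wall condition for all $i$ $\Rightarrow T$ is an EST): I would induct on $n$. The base $n \leq 1$ is immediate since $W(T_0) = \{(1,1)\}$ forces $t_1 = (1,1)$, which is realized by any single-element sequence. For the inductive step, by IH pick $A' = (a_1, \ldots, a_{n-1})$ realizing $T_{n-1}$, and parametrize extensions of $A'$ by the $n$ slots in the sorted value order $v_1 < \cdots < v_{n-1}$ of $A'$. Inserting $a_n$ in slot $k$ (between $v_k$ and $v_{k+1}$, with $v_0 = -\infty$, $v_n = +\infty$) yields EST coordinates $P(k) = (1 + f(k),\, 1 + g(k))$, where $f(k) = \max\{a_j^+ : a_j \in \{v_1, \ldots, v_k\}\}$ and $g(k) = \max\{a_j^- : a_j \in \{v_{k+1}, \ldots, v_{n-1}\}\}$ (with $\max\emptyset = 0$). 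The key claim is $\{P(k) : 0 \leq k \leq n-1\} = W(T_{n-1})$: the inclusion ``$\subseteq$'' follows by applying the forward direction to each extended sequence; the inclusion ``$\supseteq$'' requires showing that the path $P$, with $f$ nondecreasing and $g$ nonincreasing in $k$, sweeps the staircase $W(T_{n-1})$ from its top-left vertex $(1, 1+g(0))$ to its bottom-right vertex $(1+f(n-1), 1)$, visiting every wall point. Once this is established, $t_n = P(k^*)$ for some $k^*$, and inserting $a_n$ in slot $k^*$ completes the inductive step.

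\emph{Main obstacle.} The chief difficulty lies in the surjectivity portion of the key claim: every vertex of $W(T_{n-1})$ must equal $P(k)$ for some slot. Each transition $P(k) \to P(k+1)$ is dictated by the EST coordinates of the newly crossed value $v_{k+1}$, and one must argue these coordinate shifts align exactly with the unit steps along the anti-diagonal staircase of $W(T_{n-1})$, skipping no vertex. Coordinating the value-order combinatorics of $A'$ with the geometric staircase of $W(T_{n-1})$ is the substantive combinatorial heart of the argument.
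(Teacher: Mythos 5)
Your forward direction is complete and correct, and it is essentially the paper's argument: part (1) is the No-Backward-Placement lemma (Lemma~\ref{lem.no-back}), and part (2) uses the penultimate elements of the longest increasing and decreasing subsequences ending at $a_i$, just as the paper does (the paper phrases the case analysis via an emptiness test on $\mathrm{Col}(a_b^+,a_c^-)$ and the hook relation, you phrase it via comparing the indices $b$ and $c$; the substance is the same). Your reverse direction also follows the paper's route: realize $T_{n-1}$ by induction, let $a_n$ sweep through the $n$ value-slots of $A'$, and argue that the resulting points $P(k)$ sweep out all of $W(T_{n-1})$.

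However, you have left precisely the decisive step unproven: you state the surjectivity claim $\{P(k)\} \supseteq W(T_{n-1})$ and then label it the ``main obstacle'' rather than establishing it, so as written the reverse direction has a genuine gap. The paper closes it with one observation you do not make: when $a_n$ crosses a single value $v=v_{k+1}$, the coordinate $f$ can increase by at most $1$ (the penultimate element of a longest increasing subsequence ending at $v$ lies among $v_1,\ldots,v_k$, so $v^+ \leq f(k)+1$) and $g$ can decrease by at most $1$ (dually), so each transition $P(k)\to P(k+1)$ is a step of $(0,0)$, $(1,0)$, $(0,-1)$, or $(1,-1)$. Since every $P(k)$ lies in $W(T_{n-1})$ (your forward direction), $P(0)=(1,m+1)$ is the extreme upper-left wall point and $P(n-1)=(M+1,1)$ the extreme lower-right one, the only way a wall point could be missed is across a diagonal step from $(x,y)$ to $(x+1,y-1)$; but such a step forces $v^+=f(k)+1$ and $v^-=g(k)$, i.e.\ $t_v=(x,y-1)\in T_{n-1}\subseteq S(T_{n-1})$, so $(x,y-1)\notin W(T_{n-1})$, and $(x+1,y)\in W(T_{n-1})$ would require $(x,y)\in S(T_{n-1})$ or $(x+1,y-1)\in S(T_{n-1})$, both impossible since those two points are wall points. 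Hence no wall point is skipped and every point of $W(T_{n-1})$ is hit. Adding this argument (or the paper's equivalent ``unit moves along the wall'' reasoning) is exactly what is needed to turn your proposal into a complete proof.
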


Now suppose $T \in \mathcal{T}_n$, and we find its order poset $P =
P(T)$.  Consider a linear extension $A$ of $P$, and consider its EST,
$T' = T(A)$.  A priori, it may not be the case that $T = T'$, however
the following theorem will prove that this is indeed the case.  For a
poset $P = (X, <_{P})$, let $L(P)$ be the set of all linear orders on
the set $X$ that extend $<_{P}$.
\begin{theorem}[Linear Extension Theorem]\label{thm.bijection}
  For any $T \in \mathcal{T}_n$, $[T] = L(P(T))$.
\end{theorem}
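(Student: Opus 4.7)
The plan is to prove the two inclusions separately. The containment $[T] \subseteq L(P(T))$ is immediate from the definition of $P(T)$: every $A \in [T]$ satisfies $i <_A j$ whenever $i <_{P(T)} j$. For the reverse inclusion $L(P(T)) \subseteq [T]$, I proceed by induction on $n$; the case $n=1$ is trivial.

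For the inductive step, fix $A \in L(P(T))$ and write $t_i = (x_i, y_i)$. I first verify $P(T_{n-1}) \subseteq P(T)\big|_{[n-1]}$: every $B \in [T]$ truncates to an element of $[T_{n-1}]$, so any relation forced by $P(T_{n-1})$ is forced by $P(T)$. Hence $A_{[n-1]} \in L(P(T_{n-1}))$, and the inductive hypothesis yields $a_j^+ = x_j$, $a_j^- = y_j$ for $j < n$, reducing the problem to $(a_n^+, a_n^-) = (x_n, y_n)$. For the upper bound $a_n^+ \leq x_n$: whenever $j < n$ has $x_j \geq x_n$, assuming $b_j < b_n$ in some $B \in [T]$ would let us extend the length-$x_j$ increasing subsequence ending at $b_j$ by $b_n$, contradicting $b_n^+ = x_n$; thus $n <_{P(T)} j$, which $A$ respects, so $a_j < a_n$ forces $x_j \leq x_n - 1$. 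The bound $a_n^- \leq y_n$ is symmetric. For the lower bound, the EST Structure Theorem gives either $(x_n-1, y_n) \in S_{n-1}$ (Case~I) or $(x_n, y_n-1) \in S_{n-1}$ (Case~II). In Case~I with $x_n \geq 2$, unpacking the shadow produces $j < n$ with $x_j = x_n-1$ and $y_j \geq y_n$; the decreasing-subsequence dual of the upper-bound argument then forces $a_j < a_n$, giving $a_n^+ \geq x_n$. Case~II yields $a_n^- \geq y_n$ symmetrically.

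The main obstacle is the asymmetric situation, e.g., $(x_n, y_n-1) \in S_{n-1}$ but $(x_n-1, y_n) \notin S_{n-1}$: then the Case~I witness does not exist, and I must still derive $a_n^+ \geq x_n$. To do so I let $J = \{j < n : x_j = x_n-1\}$, which is nonempty (the predecessor in any length-$x_n$ increasing subsequence ending at $b_n$ in $B \in [T]$ lies in $J$) and which satisfies $y_j < y_n$ for every $j \in J$ by the hypothesis $(x_n-1, y_n) \notin S_{n-1}$. Setting $j^* = \arg\max_{j \in J} y_j$, the set $\mathrm{Col}(x_n-1, y_{j^*})$ contains no point of $T_n$: the only points of $T_n$ with $x$-coordinate $x_n-1$ lie in $J$, all with $y$-coordinate at most $y_{j^*}$. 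Hence $t_{j^*}$ hooks $t_n$ via Lemma~\ref{lem.hook_slice}(a), and Corollary~\ref{cor.hook_slice_nec} gives $j^* <_{P(T)} n$, forcing $a_{j^*} < a_n$ and therefore $a_n^+ \geq 1 + x_{j^*} = x_n$. The dual asymmetric scenario, where $(x_n-1,y_n) \in S_{n-1}$ but $(x_n,y_n-1) \notin S_{n-1}$, is handled in the analogous way via the slice relation, completing the induction.
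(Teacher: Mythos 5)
Your proof is correct, and it reaches Theorem~\ref{thm.bijection} by a genuinely different route than the paper. The paper goes through the edge-chain machinery: Lemma~\ref{lem.edge_chain} extracts from each wall placement a total order on $E_{i-1}\cup\{t_i\}$ that is forced in $P(T)$, and Lemma~\ref{lem.edge_relations} shows these relations suffice to reconstruct $T$, the key step being an injectivity/counting observation (the $|E_{n-1}|+1$ admissible wall positions for $t_n$ induce pairwise distinct insertions into the edge order, so the order data determines the placement); Cor.~\ref{cor.lin_ext} then gives the theorem. You instead run a direct induction on $n$: after restricting to $[n-1]$ (correctly using that relations of $P(T_{n-1})$ persist in $P(T)$ because members of $[T]$ truncate into $[T_{n-1}]$), you verify $(a_n^+,a_n^-)=(x_n,y_n)$ for an arbitrary linear extension $A$, obtaining the upper bounds from relations forced directly by the definition of $P(T)$ (any $j<n$ with $x_j\ge x_n$ must satisfy $n<_{P(T)}j$, and dually), and the lower bounds from Theorem~\ref{thm.EST_structure} together with Cor.~\ref{cor.hook_slice_nec}; your handling of the asymmetric wall case via $j^{*}$ maximizing $y_j$ over $\{j<n : x_j=x_n-1\}$ and the empty column $\mathrm{Col}(x_n-1,y_{j^{*}})$ is exactly the right application of the hook relation. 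The trade-off: your argument is more self-contained for this one theorem, avoiding the five-case analysis of Lemma~\ref{lem.edge_chain} and the distinct-insertion claim of Lemma~\ref{lem.edge_relations}, whereas the paper's edge-chain route also delivers Theorem~\ref{thm.hook_slice_generate} and the chain constructions of Section~\ref{sub.long_chain} as byproducts, which your proof does not (your upper-bound relations are read off from the definition of $P(T)$ rather than from hooks and slices). Two minor points worth spelling out: in your Case~I the shadow witness for $(x_n-1,y_n)$ could a priori be a row witness with $x_j\ge x_n$, which is ruled out by Lemma~\ref{lem.no-back} (leaving $x_j=x_n-1$, $y_j\ge y_n$ as you assert); and the degenerate cases $x_n=1$ or $y_n=1$, where the relevant lower bound is trivial and the set $J$ would be empty, should be mentioned explicitly.
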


Next we strengthen Cor.~\ref{cor.hook_slice_nec} by showing that the
hook and slice relations are sufficient to generate $P(T)$.
\begin{theorem}\label{thm.hook_slice_generate}
  Given $T \in \mathcal{T}$, the order poset $P(T)$ is equal to the
  transitive closure of the hook and slice relations determined by
  $T$.
\end{theorem}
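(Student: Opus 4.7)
The inclusion $Q \subseteq P(T)$, where $Q$ denotes the transitive closure of the hook and slice relations of $T$, is immediate from Corollary~\ref{cor.hook_slice_nec} together with the transitivity of $P(T)$. For the reverse inclusion, I would invoke Theorem~\ref{thm.bijection}: since $[T] = L(P(T))$ and the containment $L(P(T)) \subseteq L(Q)$ follows from $Q \subseteq P(T)$, it suffices to prove the converse containment $L(Q) \subseteq [T]$. Combined with the elementary fact that a finite poset equals the intersection of its linear extensions, $L(Q) = L(P(T))$ would then force $Q = P(T)$. (Equivalently, if $i <_{P(T)} j$ but $i \not<_Q j$, one could reverse them in some linear extension of $Q$ and, assuming that extension belongs to $[T]$, contradict the definition of $P(T)$.)

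The main task thus reduces to showing that $T(A) = T$ for every $A \in L(Q)$. I would prove $T(A)_k = T_k$ by induction on $k$. The base $k=1$ is trivial, since every EST begins at $(1,1)$. For the inductive step, suppose $T(A)_{k-1} = T_{k-1}$, write $t_k = (x_k, y_k)$, and let $t'_k = (a_k^+, a_k^-)$ be the $k$-th point of $T(A)$. By the EST Structure Theorem, both $t_k$ and $t'_k$ lie in $W(T_{k-1})$, and the goal is to show they coincide. To obtain $a_k^+ \ge x_k$, I would extract from $T_{k-1}$ a chain $t_{j_1}, \ldots, t_{j_{x_k - 1}}$ of earlier points whose $x$-coordinates run through $1, 2, \ldots, x_k-1$ and such that $j_1 <_Q j_2 <_Q \cdots <_Q j_{x_k - 1} <_Q k$; because $A$ extends $Q$, this chain produces an increasing subsequence of length $x_k$ in $A$ ending at position $k$. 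For the upper bound $a_k^+ \le x_k$, I would argue that any hypothetically longer increasing subsequence in $A$ ending at position $k$ must pass through some $t_j \in T_{k-1}$ with $x_j \ge x_k$, and that the configuration of $T_{k-1}$ together with $t_k \in W(T_{k-1})$ supplies a slice (or chain of slices) from $t_j$ to $t_k$, forcing $j >_Q k$ and yielding a contradiction. The equality $a_k^- = y_k$ is handled symmetrically, either directly or by passing to the dual EST $T^*$.

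The principal obstacle is the combinatorial content of the inductive step: extracting the hook-derived chain that witnesses $a_k^+ \ge x_k$, and locating the slice relation(s) that rule out any longer increasing subsequence, using only the position of $t_k$ in $W(T_{k-1})$ and the shape of $T_{k-1}$. I expect both to emerge from a short case analysis based on whether $(x_k - 1, y_k) \in S(T_{k-1})$ and/or $(x_k, y_k - 1) \in S(T_{k-1})$---that is, on whether $t_k$ sits against the shadow horizontally, vertically, or both---followed by a walk downward through the staircase of $T_{k-1}$ to produce the required witnesses. Once these witnesses are identified, the inductive step, and therefore the theorem, follow mechanically from the definitions of hook, slice, and wall.
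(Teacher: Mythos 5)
Your proposal is correct in outline, but it reaches the theorem by a genuinely different route than the paper. The paper does not argue directly with the full transitive closure $Q$ of all hooks and slices: it first builds the clockwise \emph{edge chain} (Lemma~\ref{lem.edge_chain}), observing that the relations $<_{T_i}$ inserted at each step are all hooks or slices, and then proves in Lemma~\ref{lem.edge_relations} that this particular family of relations already reconstructs $T$, because the $|E_{n-1}|+1$ wall positions available to $t_n$ (Theorem~\ref{thm.EST_structure} plus Lemma~\ref{lem.structure_lemmas}) correspond bijectively to the possible insertion slots of $t_n$ into the clockwise order on $E_{n-1}$; Corollary~\ref{cor.hook_slice_suf} then yields the theorem. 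You instead show directly that every linear extension $A$ of $Q$ satisfies $T(A)=T$, by inductively recomputing the coordinates $a_k^+,a_k^-$; your surrounding logic (Corollary~\ref{cor.hook_slice_nec}, Theorem~\ref{thm.bijection}, and ``a finite poset is the intersection of its linear extensions'') is sound. What your route buys is independence from the edge-chain machinery and an explicit coordinate-level explanation of \emph{which} hooks and slices pin down each point; what the paper's route buys is economy (a small distinguished set of relations, namely the neighbors of $t_n$ in the edge chain) and reuse, since the edge chain is needed again in Section~\ref{sub.long_chain} for the height bound.

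The one place you defer work -- producing the witnesses in the inductive step -- is a real obligation, but it does go through, and more easily than your anticipated case analysis on the shadow suggests. For the lower bound, take $t_j$ to be the point of $T_{k-1}$ with $x_j=x_k-1$ of \emph{maximal} $y_j$ (such a point exists because $T$ is an EST); then $\mathrm{Col}(x_k-1,y_j)\cap T_k=\emptyset$, so $t_j$ hooks $t_k$ by Lemma~\ref{lem.hook_slice} with $x=x_k-1$, and iterating down the columns gives your chain (note the chain is automatically increasing in index, since hooks only relate $i<j$, which you need in order to read off an increasing \emph{subsequence}). For the upper bound, if $a_k^+>x_k$ then the penultimate term $a_j$ of a longest increasing subsequence ending at $a_k$ has $j<k$, $a_j<a_k$, and (by the inductive hypothesis) $x_j=a_j^+\geq x_k$; since $t_k\notin S_{k-1}$ (Lemma~\ref{lem.no-back}), the row $\{(x,y_k): x>x_j\}$ meets no point of $T_k$, so taking $y=y_k$ in Lemma~\ref{lem.hook_slice}(b) shows $t_j$ slices $t_k$ outright -- no chain of slices is needed -- contradicting $a_j<a_k$ for $A\in L(Q)$. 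With these two observations supplied, your argument is complete.
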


Finally, towards understanding the number of ESTs of size $n$, we find
asymptotic bounds on $|\mathcal{T}_n|$.
\begin{theorem}\label{thm.stronger_bound}
  $|\mathcal{T}_n|=n^{n(1-o(1))}$.
\end{theorem}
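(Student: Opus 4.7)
Both bounds turn out to match $n!$ asymptotically, so $|\mathcal{T}_n|=n^{n(1-o(1))}$ follows by Stirling's formula $\log n!=n\log n-n+O(\log n)$.

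\emph{Upper bound.} Every EST of a sequence of distinct reals depends only on the rank permutation of that sequence, so the map $\pi\mapsto T(\pi)$ from $S_n$ to $\mathcal{T}_n$ is surjective. Thus $|\mathcal{T}_n|\le n!=n^{n(1-o(1))}$.

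\emph{Lower bound.} The plan is to use Theorem~\ref{thm.bijection} to write
\[
n!=\sum_{T\in\mathcal{T}_n}|L(P(T))|,
\]
whence $|\mathcal{T}_n|\ge n!/M_n$, where $M_n:=\max_{T\in\mathcal{T}_n}|L(P(T))|$. It then suffices to show $M_n\le n^{o(n)}$, which gives $|\mathcal{T}_n|\ge n!/n^{o(n)}=n^{n(1-o(1))}$. To bound $|L(P(T))|$, I would invoke Theorem~\ref{thm.hook_slice_generate}: since $P(T)$ is generated by the hook and slice relations, an analysis of the shadow/wall geometry should populate $P(T)$ with enough comparabilities to control its linear extensions accordingly. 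An alternative is a direct construction via the Structure Theorem: since $|\mathcal{T}_n|=\sum_{T'\in\mathcal{T}_{n-1}}|W(T')|$, exhibit a family of EST trajectories along which $|W(T'_{i-1})|\ge i^{1-o(1)}$ at each step $i$, giving a telescoping product of $n^{n(1-o(1))}$.

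\emph{Main obstacle.} The coarse Erd\H{o}s--Szekeres observation that $T$ with $n$ points in $[k]\times[\ell]$ forces $\max(k,\ell)\ge\sqrt{n}$, hence $|E(T)|\ge\sqrt{n}$, only yields $|W(T)|\ge\sqrt{n}+1$ and hence the weaker estimate $|\mathcal{T}_n|\ge n^{n/2\,(1-o(1))}$. Closing the gap to $n^{n(1-o(1))}$ is the real difficulty: one needs either a tight bound on $P(T)$ (yielding $M_n\le n^{o(n)}$) that goes beyond the naive width-times-Dilworth estimate $|L(P(T))|\le\mathrm{width}(P(T))^n$, or a direct construction that maintains walls of size close to $i$ (not merely $\sqrt{i}$) along a sufficiently rich family of trajectories. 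The right refinement likely comes from a finer analysis of how the hook and slice relations propagate through a typical EST, so that either most trajectories have nearly maximal wall size at every step or most posets $P(T)$ are close to a total order.
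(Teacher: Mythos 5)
Your upper bound is fine, but the lower bound is where the content of the theorem lies, and neither of your two routes to it is actually carried out --- you flag this yourself as the ``main obstacle,'' so the proposal has a genuine gap. The averaging route would need $M_n=\max_{T}|L(P(T))|\leq n^{o(n)}$, which you do not prove and which does not follow from anything you cite: the height bound of Prop.~\ref{prop.height} only gives the crude estimate $\Delta_n\leq (n-h)!\binom{n-2}{h-2}$ with $h\approx 2\sqrt{n}$, which is itself of order $n^{n(1-o(1))}$ and hence useless here (the paper never proves $\Delta_n\leq n^{o(n)}$ either; its table only suggests, empirically, exponential growth of $\Delta_n$). Your alternative wall-counting route, as you note, only yields $\min_{T'\in\mathcal{T}_{i-1}}|W(T')|\geq \sqrt{i-1}+1$ and hence $|\mathcal{T}_n|\geq n^{\frac{n}{2}(1-o(1))}$, which is strictly weaker than the claim.

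The missing idea is a simple seeding trick, and it is exactly how the paper closes the gap. Fix $k$ and count only those ESTs whose first $k$ points are the column $(1,1),(1,2),\ldots,(1,k)$, i.e.\ the EST of a decreasing sequence of length $k$. In any extension of this tableau the wall always has size at least $k$: the set of occupied rows only grows, and for each occupied row $y$ the point $(x^{*}+1,y)$, with $x^{*}$ the largest abscissa of a shadow point in that row, lies in the wall, and distinct rows give distinct wall points. By the Structure Theorem (Theorem~\ref{thm.EST_structure}), each of the remaining $n-k$ points can therefore be placed in at least $k$ ways, and distinct placement sequences give distinct ESTs, so $|\mathcal{T}_n|\geq k^{\,n-k}$. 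Taking $k=\lceil n/\log n\rceil$ gives $\log\bigl(k^{\,n-k}\bigr)=(n-n/\log n)(\log n-\log\log n)=(1-o(1))\,n\log n$, i.e.\ $|\mathcal{T}_n|\geq n^{n(1-o(1))}$. Note this realizes precisely the desideratum of your ``direct construction'' plan: you do not need walls of size close to $i$ at every step $i$ along most trajectories; a wall of size $n/\log n=n^{1-o(1)}$ throughout, forced once and for all by the initial column, already suffices, since both the $\log n$ loss in the base and the $n/\log n$ points spent on the seed are absorbed into the $o(1)$ in the exponent.
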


\section{EST Structure}\label{sec.structure}

In this section, we examine the geometric structure of an EST, leading
to a proof of Theorem~\ref{thm.EST_structure}.  In what follows we
assume $A = (a_i)_{i \leq n}$ is a sequence of distinct numbers, and
$T = T(A) = (t_i)_{i \leq n} = ((a_i^+, a_i^-))_{i \leq n}$ is its
EST.  The first geometric result, which we call No-Backwards-Placement
simply states that higher-index points cannot be in the shadow of
lower-index points of the EST.

\begin{lemma}[No-Backward-Placement]\label{lem.no-back}
  If $i < j$, then $t_j \notin S_i$.
\end{lemma}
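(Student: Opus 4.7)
The plan is to argue by contradiction, unpacking the definition of the shadow $S_i$ and then reusing the key observation from Seidenberg's proof of Theorem~\ref{thm.ES}: when indices are ordered $k<j$, a comparison between $a_k$ and $a_j$ forces a strict inequality between one of the coordinates of $t_k$ and $t_j$.

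First I would suppose for contradiction that $i<j$ and $t_j\in S_i$. By definition of $S_i=S(T_i)$, there is some $k\leq i$ such that either $t_j$ sits on the horizontal segment at $t_k$, namely $a_j^-=a_k^-$ with $a_j^+\leq a_k^+$, or $t_j$ sits on the vertical segment at $t_k$, namely $a_j^+=a_k^+$ with $a_j^-\leq a_k^-$. In either case $k\leq i<j$, so $k\neq j$, and since the pairs $(a_\ell^+,a_\ell^-)$ are pairwise distinct (as shown in the proof of Theorem~\ref{thm.ES}), the relevant weak inequality is actually strict: $a_j^+<a_k^+$ in the first case, $a_j^-<a_k^-$ in the second.

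Now I would derive a contradiction by comparing $a_k$ with $a_j$. If $a_k<a_j$, then appending $a_j$ to a longest increasing subsequence ending at $a_k$ (possible because $k<j$) gives $a_j^+\geq a_k^++1$, so $a_j^+>a_k^+$. If instead $a_k>a_j$, the analogous argument with decreasing subsequences gives $a_j^->a_k^-$. Either conclusion contradicts one of the coordinate inequalities forced on $t_j$ by membership in $S_i$: in the horizontal-segment case, $a_j^+>a_k^+$ contradicts $a_j^+<a_k^+$, and $a_j^->a_k^-$ contradicts $a_j^-=a_k^-$; the vertical-segment case is symmetric. Thus no such $k$ can exist, and $t_j\notin S_i$.

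There is essentially no hard step here: once the definition of $S_i$ is expanded into its two component cases and strict inequalities are obtained from the distinctness of the pairs, the contradiction is exactly the same dichotomy used by Seidenberg. If any part merits care, it is only the bookkeeping that ensures the $k$ produced by the shadow really does satisfy $k\leq i<j$, so that the subsequence-extension argument is legitimately available in both directions.
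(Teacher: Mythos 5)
Your proof is correct and is essentially the paper's argument: both unpack $t_j\in S_i$ into the horizontal/vertical shadow cases and derive a contradiction by the Seidenberg subsequence-extension step, using $k\leq i<j$ to append $a_j$. The only cosmetic difference is that the paper first invokes the Row Relation (and duality) to fix the sign of $a_k$ versus $a_j$ before appending, whereas you run the two-sided dichotomy directly, which amounts to the same argument.
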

\begin{proof}
  If $t_j \in S_i$, then there exists $t_k \in T_i$ such that either
  $(a_j^+ < a_k^+) \wedge (a_j^- = a_k^-)$, or $(a_j^- < a_k^-) \wedge
  (a_j^+ = a_k^+)$.  It is sufficient to consider only the first case,
  as duality would take care of the second.  Since $t_j$ is to the
  left of $t_k$ in the same row, we have $a_j < a_k$.  However,
  considering the subsequence obtained by appending $a_j$ to the
  longest decreasing subsequence ending at $a_k$, we find that $a_j^-
  \geq a_k^- + 1$, a contradiction.
\end{proof}

We now prove Theorem~\ref{thm.EST_structure}.

\begin{proof}
  The {\it only if} direction is proven by cases.  The case $n=0$ is
  clear by definitions.  It remains to show that for $n \geq 1$ we
  have $t_n \in W_{n-1}$.  By No-Backward-Placement, $t_n \notin
  S_{n-1}$, so it suffices to show either $\mathrm{Row}(a_n^+-1,
  a_n^--1) \cap T_n \neq \emptyset$ or $\mathrm{Col}(a_n^+-1, a_n^--1)
  \cap T_n \neq \emptyset$. It is clear that if $a_n < a_i$ for all $i
  < n$, then $t_n$ must occupy the extreme upper left point of
  $W_{n-1}$. Dually, if $a_n > a_i$ for all $i < n$, then $t_n \in
  W_{n-1}$ at the extreme lower right point.  Now suppose that $a_n^+
  > 1$ and $a_n^- > 1$.  This implies there is some $i < n$ such that
  $a_n^+ = a_i^+ + 1$ (take $a_i$ to be the penultimate element of the
  longest increasing subsequence ending at $a_n$), and $j$ such that
  $a_n^- = a_j^- + 1$ (take $a_j$ analogously in the longest
  decreasing subsequence ending at $a_n$).  Suppose
  $\mathrm{Col}(a_i^+, a_j^-) \cap T_{n} = \emptyset$.  Then it must
  be the case that $a_j^+ \geq a_n^+$, since otherwise $a_j < a_n$ by
  Lemma~\ref{lem.hook_slice}, a contradiction.  Thus $t_j \in
  \mathrm{Row}(a_i^+, a_j^-) \cap T_n$.  Analogously, if we assumed
  that $\mathrm{Row}(a_i^+, a_j^-) \cap T_n = \emptyset$, then
  $\mathrm{Col}(a_i^+, a_j^-) \cap T_n \neq \emptyset$ is forced.  In
  either case, $t_n \in W_{n-1}$.

  For the {\it if} direction, use induction on $n$.  Suppose $T_{n-1}$
  is the EST of the sequence $A' = (a_i)_{i < n}$, and let $t \in
  W_{n-1}$. We wish to show that there is a number $a_n$ such that the
  $n^{th}$ point of $T(A)$ is $t$, where $A = (a_i)_{i \leq n}$.  If
  $t$ is in the extreme upper left of $W_{n-1}$, then choose $a_n <
  a_i$ for all $i < n$; dually, if $t$ is in the extreme lower right,
  then choose $a_n > a_i$ for all $i < n$.  Now if $a_n$ is any
  number, and $a_n$ is increased slightly so that it swaps position
  relative to only one value $a_i$, then $a_n^+$ either remains the
  same or increases by one, and $a_n^-$ either remains the same or
  decreases by one (each case depending on whether $a_i$ was included
  in the longer monotone subsequence).  Thus, the point $t_n$ of the
  EST would either remain unchanged, move to the right one unit, move
  down one unit, or move diagonally down and right one unit each.  We
  have shown above that $t_n$ must always be in the wall of $T_{n-1}$,
  therefore as the value of $a_n$ varies from lesser than all $a_i$ to
  greater than all $a_i$, every point of $W_{n-1}$ must be visited.
  Thus, there exists a value $a_n$ so that the given point $t$ is the
  $n^{th}$ point of the EST $T(A)$.
\end{proof}

\section{Order Poset Structure}\label{sec.P(T)}

\subsection{The Edge Chain}\label{sub.edge_chain}

Let $A = (a_i)_{i \leq n}$ be a sequence of distinct numbers.  In
order to prove Theorem~\ref{thm.bijection}, we develop an inductive
method of constucting both the EST $T(A)$ and its order poset $P(T(A))$.
Along the way, we find that the two structures are in one-to-one
correspondence.

\begin{lemma}\label{lem.edge_chain}
  The placement of points in $T = T_n$ defines a total order $<_{T_n}$
  on the set of points $E_{n-1} \cup \{t_n\}$ such that $t_i <_{T_n}
  t_j \Rightarrow i <_{P(T)} j$.
\end{lemma}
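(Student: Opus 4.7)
The plan is to define $<_{T_n}$ by a canonical traversal of the wall $W_{n-1}$ from upper-left to lower-right, and then to verify via hook, slice, row, and column relations that consecutive entries in this traversal are consistently ordered in $P(T)$.

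First I would exhibit a monotone staircase structure on $W_{n-1}$: its points can be listed uniquely as $w_1,\ldots,w_m$ with $x$-coordinates weakly increasing and $y$-coordinates weakly decreasing, starting at the wall point above column $1$ and ending at the wall point to the right of row $1$. This is because $S_{n-1}$ is closed downward and leftward within its bounding rectangle, so its outer boundary sweeps monotonically from upper-left to lower-right. Each $w_i$ sits either above the shadow (hence directly above a unique C-edge of $T_{n-1}$, namely the highest $T_{n-1}$-point in that column) or to the right of the shadow (hence directly to the right of a unique R-edge of $T_{n-1}$). By Theorem \ref{thm.EST_structure}, $t_n$ equals some $w_k$. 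Assigning each $w_i\neq w_k$ its associated edge point, collapsing the doubled entries at corner points, and inserting $t_n$ at position $k$ alongside the edge of $T_{n-1}$ that would have corresponded to $w_k$, produces an ordered list of length exactly $|E_{n-1}|+1$ by Lemma \ref{lem.structure_lemmas}(d), with no repetitions. This listing defines $<_{T_n}$.

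Next I would verify the implication $t_i <_{T_n} t_j \Rightarrow i <_{P(T)} j$ on covering pairs. For two consecutive entries corresponding to adjacent wall positions both above the shadow, the column directly above the earlier edge point is empty in $T_n$ (by C-edge-ness and no-backward-placement, Lemma \ref{lem.no-back}), so Lemma \ref{lem.hook_slice} supplies a hook oriented in the direction of the traversal, and Corollary \ref{cor.hook_slice_nec} lifts it to $P(T)$. The symmetric argument handles two consecutive R-edges with a slice, and the corner turn where a C-edge meets an R-edge is handled either by the direct row or column relation (Lemma \ref{lem.row_col_relations}) or by a hook through the corner itself. The two pairs flanking $t_n$ are handled by direct appeal to the definitions of $a_n^+$ and $a_n^-$, which by construction place $t_n$ immediately adjacent on the wall to both its neighbors and supply the required hook or slice. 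Transitivity of $<_{P(T)}$ then extends the implication from covering pairs to all of $<_{T_n}$.

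The main obstacle is the case analysis at the wall transitions---particularly the corner turn from C-edges to R-edges and the two pairs flanking $t_n$---where I must verify that the empty portion of the wall separating two consecutive entries has the correct shape (column-empty for hooks, row-empty for slices) and is traversed in the direction dictated by the sequence indices. The subtlety is that the C-edge in a column need not be adjacent to its corresponding wall point (other points of $T_{n-1}$ may raise the shadow in that column above the C-edge), so one must argue that the \emph{column above the edge point itself} is empty in $T_n$, rather than merely the short segment of wall between two consecutive $w_i$'s. This reduces to a careful bookkeeping argument using the staircase monotonicity of $W_{n-1}$ together with no-backward-placement.
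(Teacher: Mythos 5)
Your definition of $<_{T_n}$ --- the staircase traversal of $W_{n-1}$ with the edge point of $T_{n-1}$ attached to each wall position, duplicates collapsed, and $t_n$ inserted at its wall position --- is exactly the ``clockwise order'' the paper uses. The gap is in your verification step. The paper proceeds by induction on $n$, so that at each stage only the one or two covering relations flanking the newly placed point must be checked; since that point has the \emph{largest} sequence index, every application of Lemma~\ref{lem.row_col_relations} or Lemma~\ref{lem.hook_slice} has a known orientation. You instead try to certify every covering pair of $E_{n-1}\cup\{t_n\}$ directly, and there your stated mechanism fails: the hook and slice relations are direction-sensitive in the sequence index (their hypotheses begin ``suppose $i<j$''), and for two old edge points the traversal does not tell you which index is smaller. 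Concretely, take $A=(3,4,2,6,5,7)$, $n=6$: in $T_5$ the points $t_3=(1,2)$ and $t_2=(2,1)$ are the topmost points of columns $1$ and $2$, their wall points $(1,3)$ and $(2,3)$ both lie strictly above the shadow, and they are consecutive in the traversal in the order $t_3,\,t_2$; yet no ``hook oriented in the direction of the traversal'' exists, because the left point has the larger index, so Lemma~\ref{lem.hook_slice}(a) simply does not apply from $t_3$ to $t_2$. The required relation $3<_{P}2$ comes instead from a \emph{slice} of $t_3$ by $t_2$ (the row beyond $(2,2)$ is empty in $T_3$). In general, for a consecutive C-edge pair you must split on the unknown index order: when the left point is weakly southwest of the right one, Lemma~\ref{lem.no-back} forces it to have the smaller index and your hook works; when the right point has the smaller index, you need a slice from it, whose emptiness hypothesis in $T_{\max(i,j)}$ requires a separate argument (again via Lemma~\ref{lem.no-back}). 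The same issue infects your corner-turn case and the dual R-edge case, and a smaller instance of it appears in your claim that the relevant column is empty ``in $T_n$'' (it need only be, and in general is only, empty in $T_{\max(i,j)}\subseteq T_{n-1}$, since $t_n$ may occupy that column).

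This case analysis is precisely what your last paragraph defers to ``careful bookkeeping,'' but it is the substantive content of the lemma rather than a routine check, and as written the body of your argument asserts the wrong relation in every reversed-index configuration. The analysis can be carried out (the configurations in which neither the hook nor the slice is available are excluded by no-backward-placement), so your route is repairable; but the paper's inductive insertion argument is exactly the device that makes all of it unnecessary, because the only new relations ever checked involve the point of maximal index. Either supply the full index-order case analysis for old--old covering pairs, or restructure the proof inductively as the paper does.
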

\begin{proof}
  The proof is by induction.  We may assume a total order
  $<_{T_{n-1}}$ on $E_{n-2} \cup \{t_{n-1}\}$.  By
  Lemma~\ref{lem.structure_lemmas}(b), $E_{n-1} \subseteq E_{n-2} \cup
  \{t_{n-1}\}$ so $E_{n-1}$ inherits the total order $<_{T_{n-1}}$.
  As part of the inductive hypothesis, assume that the ordering of
  $E_{n-1}$ is the clockwise order of points.  By ``clockwise order'',
  we mean that we follow the path of wall points starting from upper
  left to lower right, listing the edge points corresponding to each
  wall point, and in the event that two edge points correspond to a
  single wall point ({\it i.e.}, the wall point at which the wall
  changes direction from down to right), then follow the rule, {\it
    upper-left before lower-right}.  As an example, the clockwise
  order of edge points in Fig.~\ref{fig.EST_example} is $(1,1), (2,4),
  (3,3), (4,3), (4,2), (3,1)$.  Let $A$ be any sequence such that
  $T_{n} = T(A)$, and write $A = A' \cup \{a_n\}$.  Observe $T_{n-1} =
  T(A')$.  By Theorem~\ref{thm.EST_structure}, $t_n = (a_n^+, a_n^-)
  \in W_{n-1}$.  We extend the relation $(E_{n-1}, <_{T_{n-1}})$ to a
  relation $(E_{n-1} \cup \{t_n\}, <_{T_{n}})$ based on the position
  of $t_n$ in the wall. There are five cases to consider:
  \begin{enumerate}[1.]
    \item If $t_n$ occupies the extreme upper left point of $W_{n-1}$,
      then the column relation shows $a_n < a_i$, where $t_i$ is the
      least element of $E_{n-1}$.  Thus $t_n$ is the least element of
      $(E_{n-1} \cup \{t_n\}, <_{T_n})$.
    \item Dually, if $t_n$ occupies the extreme lower right point of
      $W_{n-1}$, then $t_n$ is the greatest element of $(E_{n-1} \cup
      \{t_n\}, <_{T_n})$.
    \item If $a_n^+ > 1$ and $\mathrm{Col}(a_n^+-1,a_n^--1)\cap T_n =
      \emptyset$, then consider the edge points $t_i, t_j$ such that
      $a_i^+ = a_n^+ - 1$ and $a_j^+ = a_n^+$.  Clearly $t_i$ and
      $t_j$ are consecutive in the order $<_{T_{n-1}}$.  Then $t_i$
      hooks $t_n$, implying $a_i < a_n$, while $a_j > a_n$ due to the
      column relation.  Extend $<_{T_{n-1}}$ by inserting $t_n$
      between $t_i$ and $t_j$.  
    \item Dually, if $a_n^- > 1$ and $\mathrm{Row}(a_n^+-1, a_n^--1)
      \cap T_n = \emptyset$, then we find consecutive $t_i, t_j$ such
      that $a_i^- = a_n^-$ and $a_j^- = a_n^--1$.  Extend the linear
      order by inserting $t_n$ between $t_i$ and $t_j$.
    \item If none of the above conditions apply, then there exists
      $t_i \in E_{n-1}$ in the same row as $t_n$ (to the left), and
      $t_j \in E_{n-1}$ in the same column (below).  As conditions 3
      and 4 do not apply we must have $a_j^+=a_i^++1$ and
      $a_j^-=a_i^--1$.  Since $a_i < a_n < a_j$ is forced, and $t_i$
      and $t_j$ are consecutive, the linear order is extended by
      inserting $t_n$ between $t_i$ and $t_j$.
  \end{enumerate}
  Note that in all cases the new set of edge points, $E_n \subseteq
  E_{n-1} \cup \{t_n\}$, satisfies the condition that $<_{T_n}$ is the
  clockwise order.
\end{proof}

\begin{cor}
  Within the Order Poset $P(T)$, there are chains $\mathcal{E}_i$
  corresponding to the total orders $(E_i, <_{T_{i}})$ of
  Lemma~\ref{lem.edge_chain}, for each $i \leq n$.  Call
  $\mathcal{E}_n$ the {\bf edge chain} of $T$.
\end{cor}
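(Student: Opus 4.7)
The plan is to observe that this corollary is essentially a repackaging of Lemma~\ref{lem.edge_chain} applied at each stage $i \leq n$, together with a brief verification that the resulting chains live in $P(T)$ rather than only in the intermediate posets $P(T_i)$.

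First, I would apply Lemma~\ref{lem.edge_chain} to the sub-EST $T_i$ for every $i \leq n$; note that each $T_i$ is itself a valid EST by Lemma~\ref{lem.structure_lemmas}(a) together with Theorem~\ref{thm.EST_structure}. This produces a total order $<_{T_i}$ on $E_{i-1} \cup \{t_i\}$ satisfying $t_j <_{T_i} t_k \Rightarrow j <_{P(T_i)} k$. By Lemma~\ref{lem.structure_lemmas}(b), $E_i \subseteq E_{i-1} \cup \{t_i\}$, so the restriction of $<_{T_i}$ to $E_i$ is a well-defined total order on $E_i$. I would take this restricted order to be the chain $\mathcal{E}_i$.

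The remaining step is to promote each $\mathcal{E}_i$ from a chain in $P(T_i)$ to a chain in $P(T)$. This follows straight from the definition of the order poset: for any $A = (a_1,\ldots,a_n) \in [T]$, the truncation $A' = (a_1,\ldots,a_i)$ lies in $[T_i]$, so every relation that must hold in every $A' \in [T_i]$ (with indices in $[i]$) must also hold in every $A \in [T]$. Hence $<_{P(T_i)}$ is a sub-relation of $<_{P(T)}$ on the common index set $[i]$, and so $t_j <_{T_i} t_k \Rightarrow j <_{P(T)} k$ for all $j,k$ indexing points of $E_i$. This makes $\mathcal{E}_i$ a chain in $P(T)$.

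I do not anticipate any serious obstacle: Lemma~\ref{lem.edge_chain} has done all the geometric and order-theoretic work, and this corollary is little more than a naming convention together with the one-line observation that relations in $P(T_i)$ persist in $P(T)$.
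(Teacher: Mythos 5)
Your proposal is correct and matches the paper's treatment: the paper states this corollary without proof as an immediate consequence of Lemma~\ref{lem.edge_chain}, and your argument simply spells out the implicit glue, namely that $E_i \subseteq E_{i-1}\cup\{t_i\}$ inherits the total order and that relations forced for every sequence generating $T_i$ persist in $P(T)$ because truncations of sequences in $[T]$ lie in $[T_i]$. This is exactly the intended (one-line) justification, so there is nothing to add.
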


\subsection{The Order Poset Theorem}

\begin{lemma}\label{lem.edge_relations}
  Let $T \in \mathcal{T}_n$.  The set of order relations
  $\{<_{T_i}\}_{i \leq n}$ from Lemma~\ref{lem.edge_chain} is
  necessary and sufficient for a sequence $A$ to generate $T$.
\end{lemma}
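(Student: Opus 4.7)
The plan is to prove the two implications of the equivalence separately. \emph{Necessity} is essentially Lemma~\ref{lem.edge_chain}: every relation $t_i <_{T_k} t_j$ with $k \leq n$ forces $i <_{P(T)} j$, so it must be respected by every sequence $A \in [T]$.

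For \emph{sufficiency}, I would induct on $n$. Suppose $A = (a_1,\ldots,a_n)$ respects every relation in $\{<_{T_i}\}_{i \leq n}$. Since each $<_{T_i}$ for $i < n$ involves only the first $i$ terms of $A$, the inductive hypothesis yields $T(a_1,\ldots,a_{n-1}) = T_{n-1}$; writing $T(A) = (t_1,\ldots,t_{n-1},t'_n)$, the goal reduces to showing $t'_n = t_n$. Theorem~\ref{thm.EST_structure} already gives $t'_n \in W_{n-1}$.

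The crucial step is to read the five-case analysis in the proof of Lemma~\ref{lem.edge_chain} as defining a bijection
\[
  \Phi : W_{n-1} \longrightarrow \{\text{gaps in the clockwise order on } E_{n-1}\},
\]
where $\Phi(w)$ is the unique gap into which a new element must be inserted among the edge values in order for its EST point to equal $w$. By Lemma~\ref{lem.structure_lemmas}(d), both sides have cardinality $|E_{n-1}| + 1$, so verifying that $\Phi$ is injective through the five cases suffices. Now apply the same case analysis to the sequence $A$ itself, whose new EST point is $t'_n \in W_{n-1}$: this places $a_n$ in the gap $\Phi(t'_n)$. On the other hand, the assumption that $A$ respects $<_{T_n}$ places $a_n$ in the gap $\Phi(t_n)$. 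Since $a_n$ occupies a unique gap, $\Phi(t_n) = \Phi(t'_n)$, and injectivity of $\Phi$ forces $t_n = t'_n$.

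The main obstacle is more bookkeeping than conceptual difficulty: it lies in verifying that $\Phi$ is well-defined and injective. This requires stepping through the five wall configurations --- extreme upper-left, extreme lower-right, horizontal step, vertical step, and inner corner --- and checking that each is matched with a distinct gap in the clockwise chain of edge points. Once that catalogue is in place, the uniqueness-of-gap observation collapses the rest of the argument into a single line.
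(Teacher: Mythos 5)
Your proposal is correct and follows essentially the same route as the paper: necessity from Lemma~\ref{lem.edge_chain}, then an inductive sufficiency argument in which the $|E_{n-1}|+1$ wall placements of $t_n$ (Lemma~\ref{lem.structure_lemmas}(d) plus Theorem~\ref{thm.EST_structure}) are matched injectively with the $|E_{n-1}|+1$ possible insertion positions in the clockwise edge chain, so the relation $<_{T_n}$ pins down the placement. Your version merely makes explicit the bijection $\Phi$ and the step relating the sequence $A$ to its gap, which the paper asserts more tersely as ``each placement induces a distinct linear ordering.''
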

\begin{proof}
  Necessity is clear from Lemma~\ref{lem.edge_chain}.  To show
  sufficiency, we must show that $T$ can be reconstructed from the
  relations.  By induction, it suffices to show that $t_n$ can be
  uniquely determined, given $T_{n-1}$ and the order relation
  $<_{T_n}$.  By Lemma~\ref{lem.structure_lemmas} and
  Theorem~\ref{thm.EST_structure}, there are exactly $|E_{n-1}| + 1$
  distinct points at which $t_n$ can be placed, and each placement
  induces a distinct linear ordering on $E_{n} \subseteq E_{n-1} \cup
  \{t_n\}$.  Thus the placement of $t_n$ is determined by $(E_{n-1}
  \cup \{t_n\}, <_{T_n})$.
\end{proof}

\begin{cor}\label{cor.hook_slice_suf}
  The set of hook and slice relations are sufficent to generate
  $P(T)$.
\end{cor}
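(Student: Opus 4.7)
The plan is to combine Lemma~\ref{lem.edge_relations} with the case analysis in the proof of Lemma~\ref{lem.edge_chain}. One inclusion is already at hand: by Corollary~\ref{cor.hook_slice_nec}, every hook and every slice relation lies in $P(T)$, so the transitive closure of all hook and slice relations is contained in $P(T)$. The content of the corollary is the reverse inclusion.

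To establish the reverse inclusion, I would first show that the edge chain relations $\{<_{T_i}\}_{i\leq n}$ themselves generate $P(T)$ as a transitive closure, and then show that each edge chain relation in turn lies in the transitive closure of the hook and slice relations. For the first step, Lemma~\ref{lem.edge_chain} puts every edge chain relation inside $P(T)$; conversely, by Lemma~\ref{lem.edge_relations}, any sequence $A$ satisfying all of them has $T(A) = T$, i.e.\ $A\in[T]$. Combined with Theorem~\ref{thm.bijection}, this forces every linear extension of the transitive closure of $\{<_{T_i}\}_{i\leq n}$ to lie in $L(P(T))$, so that transitive closure must equal $P(T)$.

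For the second step I would revisit the five cases in the proof of Lemma~\ref{lem.edge_chain}, each of which introduces at most two new comparisons between $t_n$ and its neighbors in the edge chain. These are justified by a column relation (case 1), a row relation (case 2), a hook together with a column (case 3), a slice together with a row (case 4), or a row and a column together (case 5). Provided one knows that row and column relations are themselves special cases of hook and slice relations, every new comparison introduced in passing from $<_{T_{n-1}}$ to $<_{T_n}$ lies in the transitive closure of hook and slice relations, and induction on $n$ then completes the argument.

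The only subtle point---and the main obstacle I would flag---is verifying that a row (resp.\ column) relation is itself a hook (resp.\ slice). Given $t_j\in\mathrm{Row}(t_i)$, Lemma~\ref{lem.no-back} first forces $i<j$ (otherwise $t_i$ would lie in $S_{i-1}$), and then taking $x=x_j$ in the hook relation makes $\mathrm{Col}(x_j,y_i)\cap T_j=\emptyset$, since any earlier point in that column strictly above $y_i$ would place $t_j$ inside $S_{j-1}$, again contradicting No-Backward-Placement. The column-to-slice argument is the dual. Once this small lemma is in hand, the rest of the argument is purely organizational.
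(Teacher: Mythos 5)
Your proposal is correct and follows essentially the same route as the paper: the paper's proof simply cites Lemma~\ref{lem.edge_relations} to identify $P(T)$ with the transitive closure of the edge-chain relations $\{<_{T_i}\}_{i\leq n}$ and then observes that every relation appearing in the five cases of Lemma~\ref{lem.edge_chain} is a hook or a slice. Your extra verification that row and column relations are themselves hooks and slices (via No-Backward-Placement) is a correct filling-in of a point the paper leaves implicit in its remark that hooks and slices generalize the row and column relations.
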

\begin{proof}
  By Lemma~\ref{lem.edge_relations}, $P(T)$ is equal to the transitive
  closure of the relations $\{<_{T_i}\}_{i \leq n}$ determined by $T$.
  All of these relations are either hooks or slices.
\end{proof}

\begin{cor}\label{cor.lin_ext}
  Let $T = T_n \in \mathcal{T}_n$.  A sequence $A$ of the numbers
  $\{1, 2, \ldots, n\}$ generates $T$ if and only if $(A, <)$ is a
  linear extension of $([n], <_{P(T)})$.  A real number sequence $A$
  generates $T$ if and only if there is an order-preserving map of
  sequences $A \to A'$ such that $A'$ is a linear extension of $([n],
  <_{P(t)})$.
\end{cor}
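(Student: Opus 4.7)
The plan is to deduce Corollary~\ref{cor.lin_ext} from Lemma~\ref{lem.edge_relations} and Corollary~\ref{cor.hook_slice_suf}, first handling the case of permutations of $[n]$ and then reducing the real-valued case to it via rank replacement.

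First I would treat the integer case. The forward direction is immediate from the definition of $P(T)$: if $A \in [T]$, then by the very definition of $<_{P(T)}$, any pair with $i <_{P(T)} j$ satisfies $i <_A j$, so the linear order induced by $A$ extends $<_{P(T)}$. For the reverse direction, suppose $A$ is a permutation of $[n]$ whose induced linear order extends $<_{P(T)}$. By Corollary~\ref{cor.hook_slice_suf}, $P(T)$ is the transitive closure of the hook and slice relations of $T$; and as noted in the proof of that corollary, the edge-chain relations $\{<_{T_i}\}_{i \leq n}$ of Lemma~\ref{lem.edge_chain} are themselves hooks or slices, so they lie in $<_{P(T)}$ and are therefore respected by $A$. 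The sufficiency half of Lemma~\ref{lem.edge_relations} then forces $T(A) = T$.

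Next I would handle the real-valued case by observing that the EST of $A = (a_i)_{i \leq n}$ depends only on the relative order of its entries. Explicitly, let $A'$ be the permutation of $[n]$ obtained by replacing each $a_i$ with its rank among $\{a_1,\ldots,a_n\}$; this defines an order-preserving map $A \to A'$, and since the longest increasing and decreasing subsequences ending at each index have the same lengths after relabeling, $T(A) = T(A')$. Conversely, any order-preserving map $A \to A'$ with $A'$ a permutation of $[n]$ must send $A$ to its rank sequence (up to the identification $A' \leftrightarrow$ rank), so the two conditions match. The real-case equivalence then follows from the integer case applied to $A'$: $T(A) = T$ iff $T(A') = T$ iff $A'$ is a linear extension of $P(T)$.

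The main obstacle here is not really technical, since the heavy lifting has already been done in Lemmas~\ref{lem.edge_chain} and~\ref{lem.edge_relations}. The one subtlety worth double-checking is the identification between the linear-extension property of the sequence $A$ and the order-relation satisfaction required by Lemma~\ref{lem.edge_relations}: both amount to saying that whenever $i <_{P(T)} j$ one has $a_i < a_j$, so they coincide. Everything else is bookkeeping about ranks and the observation that ESTs are invariant under order-preserving relabeling.
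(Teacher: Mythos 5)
Your proposal is correct and follows essentially the same route the paper intends: the paper states this corollary without a separate proof, treating it as an immediate consequence of Lemma~\ref{lem.edge_relations} together with the fact that the edge-chain relations are hooks/slices (hence contained in $<_{P(T)}$, by Cor.~\ref{cor.hook_slice_nec}), which is exactly your argument. Your explicit rank-replacement reduction for real-valued sequences just spells out the order-invariance the paper leaves implicit.
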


Cor.~\ref{cor.hook_slice_suf} implies
Theorem~\ref{thm.hook_slice_generate}, while Cor.~\ref{cor.lin_ext}
implies Theorem~\ref{thm.bijection} .

\subsection{Long chains within the order poset}\label{sub.long_chain}

With a view towards counting $\mathcal{T}_n$, we find a chain in $P =
P(T)$ (for arbitrary $T \in \mathcal{T}_n$) which includes the edge
chain $\mathcal{E}_{n}$ as a subchain.  Write the corner points in
linear order: $C(T) = \{t_{r_1}, t_{r_2}, \ldots, t_{r_k}\}$.  Call
two corner points, $t_{r_{i}}$ and $t_{r_{i+1}}$, {\bf consecutive}.

\begin{figure}[hb!]
\begin{center}
  \scalebox{0.9}{%
      \includegraphics{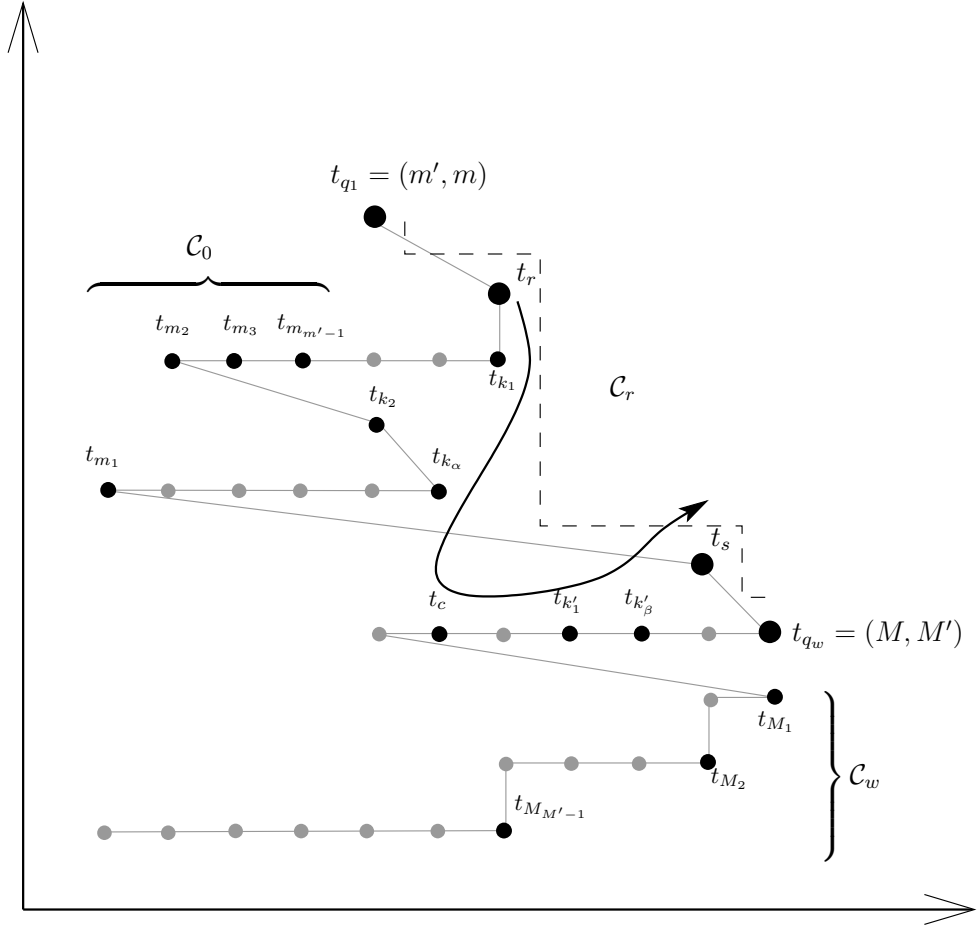}
  }
  \caption{Chains in $P(T)$.  The diagram shows an example EST, with
    points labeled as in the proof of Lemma~\ref{lem.small_chain}.
    Corner points are shown as larger dots.  The curved arrow
    indicates the direction of points in the chain $\mathcal{C}_r$.}
  \label{fig.chains}
\end{center}
\end{figure}

\begin{lemma}\label{lem.small_chain}
  Let $t_r = (x_r, y_r)$ and $t_s = (x_s, y_s)$ be consecutive corner
  points of $T$.  There is a chain from $r$ to $s$ in $P$ with at
  least $(y_r-y_s)+(x_s-x_r)-1$ intermediate points.
\end{lemma}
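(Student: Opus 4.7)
My plan is to construct an explicit chain $\mathcal{C}_r$ from $r$ to $s$ in $P$ by combining a descent through column $x_r$ with an advance across row $y_s$. First, list the $T$-points in column $x_r$ in decreasing order of $y$-coordinate as $t_r = u_0, u_1, \ldots, u_{k-1}$, where $k$ is the number of such points. By the Column Relation (Lemma~\ref{lem.row_col_relations}), this yields a chain $r <_P u_1 <_P \cdots <_P u_{k-1}$. Symmetrically, listing the $T$-points in row $y_s$ in increasing order of $x$-coordinate as $v_0, v_1, \ldots, v_{l-1} = t_s$ gives, by the Row Relation, a chain $v_0 <_P v_1 <_P \cdots <_P s$.

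To join the two pieces I consider two cases. If $(x_r, y_s) \in T$, then since $t_r$ is the topmost element of column $x_r$ and $t_s$ is the rightmost of row $y_s$ (both by the corner condition), this shared point is simultaneously $u_{k-1}$ and $v_0$; the two chains concatenate directly into a chain of total length $k + l - 1$. Otherwise, I use a single hook or slice relation from Lemma~\ref{lem.hook_slice} to establish $u_{k-1} <_P v_0$, yielding a chain of total length $k + l$. The empty column or row needed for this hook/slice should be guaranteed by the fact that no corner lies strictly between $t_r$ and $t_s$, so the sub-EST between the column bottom and the row left-end has the clean structure required by the definition of hook/slice.

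The crux of the proof is the length bound: setting $\varepsilon = 1$ if $(x_r, y_s) \in T$ and $\varepsilon = 0$ otherwise, I need
\[
k + l - \varepsilon \geq (y_r - y_s) + (x_s - x_r) + 1,
\]
which gives the claimed $(y_r - y_s) + (x_s - x_r) - 1$ intermediate points. My plan for this inequality is to analyze the wall segment between $t_r$ and $t_s$ via Theorem~\ref{thm.EST_structure} and Lemma~\ref{lem.no-back}: the wall between two consecutive corners is a monotone staircase of total $L^1$-extent $(y_r - y_s) + (x_s - x_r)$, and each unit step of this staircase can be traced, using the evolution of the shadow forced by No-Backward-Placement, to a $T$-point that must lie in column $x_r$ or row $y_s$.

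The main obstacle I anticipate is making the support correspondence between wall steps and $T$-points in the L-shape column $x_r \cup$ row $y_s$ precise, in particular ensuring that each step contributes a \emph{distinct} such $T$-point, and verifying the connecting hook or slice relation in the case $(x_r, y_s) \notin T$ without inadvertently losing an intermediate.
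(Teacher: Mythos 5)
Your construction draws its chain from the wrong set of points, and both of its key steps fail. Take $A=(10,20,30,40,9,8,7,7.5,35)$, whose EST is $T=\{(1,1),(2,1),(3,1),(4,1),(1,2),(1,3),(1,4),(2,4),(4,2)\}$. Its corner points are exactly $t_8=(2,4)$ and $t_9=(4,2)$, which are therefore consecutive, so the lemma demands a chain from $8$ to $9$ with at least $(4-2)+(4-2)-1=3$ intermediate points. But column $x_r=2$ contains only the two points $(2,1),(2,4)$ and row $y_s=2$ contains only $(1,2),(4,2)$, and $(2,2)\notin T$, so $k+l-\varepsilon=4<5$: your crucial inequality $k+l-\varepsilon\ge (y_r-y_s)+(x_s-x_r)+1$ is false. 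The underlying misconception is that the column of $t_r$ and the row of $t_s$ must be populated in proportion to the staircase length between the corners; in an EST columns and rows need not be contiguous (the decreasing subsequence witnessing the height $y_r$ of $t_r$ can live entirely in other columns), so $k$ and $l$ can both be $2$ however far apart the consecutive corners are. Moreover your glue relation $u_{k-1}<_P v_0$ is not forced and is in fact false here: $u_{k-1}=(2,1)$ carries the value $20$ while $v_0=(1,2)$ carries the value $9$ in the realization above, so no hook, slice, or ``clean structure'' argument can produce it --- the relation simply does not hold in $P(T)$.

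The paper's proof avoids both problems by using a different point set: the portion of the edge chain (Lemma~\ref{lem.edge_chain}) strictly between the consecutive corners, i.e., the rightmost points of the rows strictly between $y_s$ and $y_r$ together with the topmost points of the columns strictly between $x_r$ and $x_s$. Since every row $1,\ldots,m$ and every column $1,\ldots,M$ of an EST is nonempty, this supplies exactly $(y_r-y_s-1)+(x_s-x_r-1)$ points, and they are already totally ordered in $P$ by Lemma~\ref{lem.edge_chain}; one then inserts a single additional interior point $t_c$ between the ``vertical'' and ``horizontal'' parts of this chain via a slice relation, which yields the required $(y_r-y_s)+(x_s-x_r)-1$ intermediate points. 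In the example above such a chain is $8<_P 6<_P 1<_P 3<_P 9$ (values $7.5<8<10<30<35$), and note that all of its intermediate points lie outside column $2$ and row $2$ --- which is exactly why a proof confined to that column and row cannot work.
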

\begin{proof}
  Let $\mathcal{C}_r$ be the portion of the edge chain strictly
  between $t_r$ and $t_s$.  Note that $|\mathcal{C}_r| = \alpha +
  \beta$, where $\alpha = x_s-x_r-1$ and $\beta = y_r-y_s-1$.  In
  terms of the poset $P$, we may write:
  \[
    \mathcal{C}_r = {k_1} <_P {k_2} <_P \cdots <_P {k_\alpha} <_P
            {k'_1} <_P {k'_2} <_P \cdots <_P {k'_\beta},
  \]
  where an index $k_i$ refers to an edge point $t_{k_i}$ corresponding
  to a vertical portion of the wall, while an index $k'_j$ refers to
  an edge point $t_{k'_i}$ corresponding to a horizontal portion of
  the wall. It may be helpful to refer to Fig.~\ref{fig.chains}.  We
  shall find an additional point $t_c$ such that $k_{\alpha} <_{P} c
  <_{P} k'_1$.  Without loss of generality, assume $k_{\alpha} <
  k'_1$, since the case $k_{\alpha} > k'_1$ can be handled in $T^*$.
  Set $t_c = (x_c, y_{k'_1})$ where $x_c=\max\{x \;|\; (x, y_{k'_1})
  \in T_{k_{\alpha}} \}$.  Such a point $t_c$ exists by the EST
  Structure Theorem, and No-Backwards-Placement implies $x_c<
  x_{k'_1}$.  Now $c <_{P} k'_1$ by the horizontal relation, and $c
  >_{P} k_{\alpha}$ because $\mathrm{Row}(t_c) \cap T_{k_{\alpha}} =
  \emptyset$ implies that $t_c$ slices $t_{k_{\alpha}}$.
\end{proof}
\begin{lemma}\label{lem.chain}
  The poset $P(T)$ has height at least $M + m - 1$, where
  \begin{equation}\label{eqn.def_M_m}
    M = M(T) = \max\{x \;|\; (x,y) \in T\} \qquad \textit{and} \qquad
    m = m(T) = \max\{y \;|\; (x,y) \in T\}.
  \end{equation}
\end{lemma}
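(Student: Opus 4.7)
The plan is to build a chain of length $M + m - 1$ in $P(T)$ by combining the edge chain $\mathcal{E}_n$ (which is itself a chain in $P(T)$) with the extra intermediate points furnished by Lemma~\ref{lem.small_chain} between consecutive corners.

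First I would verify that the first corner $r_1$ (in the clockwise order) lies in row $y_{r_1} = m$ and the last corner $r_k$ lies in column $x_{r_k} = M$: the rightmost point of $T$ in row $m$ has an empty column above it and an empty row to its right, so it is a corner, and since it has the largest $y$-coordinate among corners it must equal $r_1$. The assertion $x_{r_k} = M$ follows dually.

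Next, I would establish that every column $x \in \{1,\ldots,M\}$ and every row $y \in \{1,\ldots,m\}$ contains at least one point of $T$. This is proved by a downward induction: pick some $i_0$ realizing $a_{i_0}^+ = M$ (such an $i_0$ exists by the definition of $M$), and observe that any $i$ with $a_i^+ = j > 1$ admits some $i' < i$ with $a_{i'} < a_i$ and $a_{i'}^+ = j - 1$; iterating yields points at every level $1, 2, \ldots, M$, hence in every column. A dual argument gives a point in every row $1, \ldots, m$. Walking the clockwise edge chain and counting by region, I then find $x_{r_1} - 1$ top edge points in columns before $r_1$, $k$ corners, exactly $(x_{r_{i+1}} - x_{r_i} - 1) + (y_{r_i} - y_{r_{i+1}} - 1)$ non-corner edge points between $r_i$ and $r_{i+1}$, and $y_{r_k} - 1$ right edge points in rows after $r_k$; the telescoping sum yields $|E_n| = x_{r_k} + y_{r_1} - k = M + m - k$.

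Finally, Lemma~\ref{lem.small_chain} contributes one additional point (the $t_c$ from its proof) between each pair of consecutive corners, inserting $k - 1$ elements into the edge chain. The total chain length in $P(T)$ is therefore
\[
|E_n| + (k - 1) = (M + m - k) + (k - 1) = M + m - 1,
\]
as required.

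The main obstacle is the column--row coverage claim; once that is in hand, the rest of the argument is a direct telescoping count combined with the application of Lemma~\ref{lem.small_chain}.
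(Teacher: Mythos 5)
Your proof is correct and follows essentially the same route as the paper: take the edge chain, insert the $|C_n|-1$ intermediate points from Lemma~\ref{lem.small_chain}, and use the identity $|E_n| = M + m - |C_n|$ to conclude the chain has $M+m-1$ elements. The only difference is that you spell out in detail (corner positions, column/row coverage, telescoping count) what the paper dismisses as ``a simple counting argument,'' and your details check out.
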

\begin{proof}
  Let $\mathcal{L}$ be the chain obtained from the edge chain by
  inserting each internal point $t_c$ as in
  Lemma~\ref{lem.small_chain}.  There are exactly $|C_n| - 1$ such
  internal points.  A simple counting argument gives the relation
  $|E_n| = M + m - |C_n|$, and the result follows from $|\mathcal{L}|
  = |E_n| + |C_n| - 1$.
\end{proof}

\begin{prop}\label{prop.height}
  For all $T \in \mathcal{T}_n$, $\mathrm{Height}(P(T)) \geq
  2\sqrt{n}-1$.
\end{prop}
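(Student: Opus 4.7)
The plan is to combine Lemma~\ref{lem.chain} with an elementary estimate on $M$ and $m$ coming from the distinctness of points in $T$.

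First I would observe that, by definition, every point of $T$ lies in the rectangle $[1,M] \times [1,m] \cap (\Z^+)^2$, which contains exactly $Mm$ lattice points. Since the $n$ points of $T$ are distinct (as noted right after the definition of EST), this forces the bound $Mm \geq n$. This is really just a restatement of the Erd\H{o}s-Szekeres pigeonhole that underlies Seidenberg's proof: if every increasing subsequence has length at most $M$ and every decreasing one at most $m$, then the distinct pairs $(a_i^+, a_i^-)$ can only occupy $Mm$ spots.

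Next I would invoke the AM--GM inequality on the positive integers $M$ and $m$ to conclude $M + m \geq 2\sqrt{Mm} \geq 2\sqrt{n}$. Combining this with Lemma~\ref{lem.chain}, which states $\mathrm{Height}(P(T)) \geq M + m - 1$, yields
\[
\mathrm{Height}(P(T)) \geq M + m - 1 \geq 2\sqrt{n} - 1,
\]
as desired.

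There is no real obstacle here: the structural work has already been done in Lemma~\ref{lem.chain}, which produced a chain of length $M + m - 1$ in $P(T)$. The proposition is essentially a two-line consequence of that lemma together with the classical bound $Mm \geq n$ and AM--GM. The only thing worth being careful about is justifying $Mm \geq n$ cleanly, but this follows immediately from the distinctness of the EST points and the fact that they all have coordinates in $\{1,\dots,M\} \times \{1,\dots,m\}$.
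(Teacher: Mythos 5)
Your proof is correct and follows essentially the same route as the paper: both rest on Lemma~\ref{lem.chain} together with the pigeonhole bound $Mm \geq n$ coming from the points of $T$ lying in $[1,M]\times[1,m]$. If anything, your use of AM--GM to get $M+m \geq 2\sqrt{Mm} \geq 2\sqrt{n}$ is a cleaner finish than the paper's integer minimization (which it only says is ``well approximated by $2\sqrt{n}$''), since it yields the stated bound $2\sqrt{n}-1$ directly.
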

\begin{proof}
  Let $M$ and $m$ be as in Eqn.~(\ref{eqn.def_M_m}).  Since the points
  of $T$ must fall in the region $[1,M] \times [1,m]$ we have $Mm \geq
  n$. Minimizing $M + m$ over this (integer) constraint yields $M + m
  \geq \lceil\sqrt{n}\rceil + \lfloor\sqrt{n}\rfloor$ which is well
  approximated by $2\sqrt{n}$. The result follows from
  Lemma~\ref{lem.chain}.
\end{proof}

\section{Quantitative Results}\label{sec.quant}

Our main goal in this section is to find bounds on the order of
$\mathcal{T}_n$, but we will also discuss the distribution of the
sizes of the equivalence classes $[T] = L(P(T))$.  We will close with
some explicit values for small $n$.  

First observe that Prop.~\ref{prop.height} can be used to get a crude
lower bound on the number of linear extensions of a given poset $P =
P(T)$ and, by extension, $|\mathcal{T}_n|$.  Suppose $P$ has height $h
= 2\lceil \sqrt{n}\rceil - 1$ (recall all such posets have minimum and
maximum elements).  Among posets (of this type) the number of linear
extensions decreases with the number of additional relations and so to
maximize the size of the equivalence class, $\Delta_n = \max\{|[T]|
\;|\; T\in\mathcal{T}_n\}$, the $(n-h)$ points outside the unique
maximal chain must form an antichain. To extend $P$ to a linear order
there are $(n-h)!$ ways to order the antichain (this produces a poset
consisting of 2 internally disjoint chains) and then
$\binom{n-2}{h-2}$ ways of interlacing the chains. Hence $\Delta_n\leq
(n-h)!\binom{n-2}{h-2}$ from which one could get an explicit lower
bound $|\mathcal{T}_n| \geq \frac{n!}{\Delta_n}$.  However, this bound
is very weak as the distribution of class sizes is skewed towards the
low end (see Fig.~\ref{fig.EST-dist8}). We give a stronger bound in
Theorem~\ref{thm.stronger_bound} which shows that the order of
$\mathcal{T}_n$ grows exponentially.  The following is a proof of
Theorem~\ref{thm.stronger_bound}.

\begin{figure}[ht!]
\begin{center}
  \includegraphics[scale=1.1]{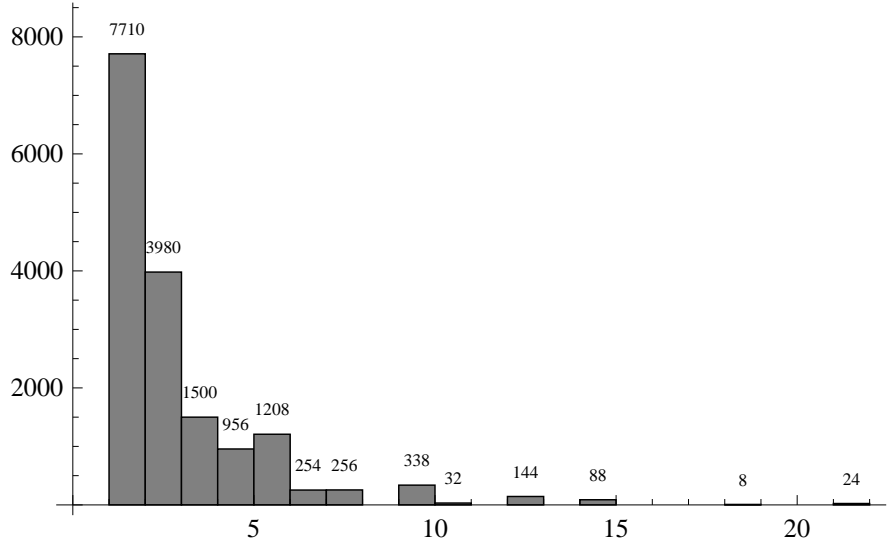}
  \caption{Histogram: Frequency of $|[T]|$ for $T \in
    \mathcal{T}_8$} \label{fig.EST-dist8}
\end{center}
\end{figure}

\begin{proof}
  The upper bound is immediate as $|\mathcal{T}_n|\leq n!$. For the
  lower bound consider the tableau
  \[
    \{(1,1),(1,2),(1,3)\ldots (1,k)\}.
  \]
  Observe that any extension has a wall of size at least $k$ as the
  number of columns cannot be decreased. Sequentially choosing the
  remaining $n-k$ points we get that $|\mathcal{T}_n|\geq
  k^{n-k}$ for all $k\leq n$. Taking $k=\frac{n}{\log
    n}$ gives the desired result.
\end{proof}

\begin{figure}[ht!]
  \begin{center}
    \begin{tabular}{|c|c|c|c|c|}
      \hline
      $n$      & $|\mathcal{T}_n|$ & $\Delta_n$ &
      $U_n$  \\ \hline
      $\leq 3$ &    $n!$ &      1 &  1     \\ \hline
      4        &      22 &      2 & .83    \\ \hline
      5        &      96 &      3 & .78    \\ \hline
      6        &     480 &      5 & .68    \\ \hline
      7        &    2682 &     10 & .57    \\ \hline
      8        &   16498 &     21 & .47    \\ \hline
      9        &  110378 &     44 & .44    \\ \hline
      10       &  795582   &      &        \\ \hline
      11       & 6131722   &      &        \\ \hline
      12       & 50224736  &      &        \\ \hline
      13       & 434989688 &      &        \\ \hline

    \end{tabular}
    \caption{ Values of $|\mathcal{T}_n|$, $\Delta_n=\max\{|[T]| \;|\;
      T\in\mathcal{T}_n\}$, and $U_n = \frac{1}{|\mathcal{T}_n|}|\{T
      \in\mathcal{T}_n \;|\; |[T]| = 1\}|$.}
    \label{fig.dist_table}
  \end{center}
\end{figure}

\begin{rmk}
 It may be possible to derive a more explicit lower bound on the
 number of EST's by taking into account the distribution of sizes of
 equivalence classes in $\mathcal{T}_n$ for arbitrary $n$.
 Unfortunately, we do not know much about the distribution except that
 it resembles a power law (with negative exponent).  Even knowing the
 number of ESTs that have unique realizations (equivalently, that have
 linear posets) would be helpful.  We present some data in
 Fig.~\ref{fig.dist_table}.
\end{rmk}

\begin{acknowledgement*}
We would like to thank the referees for many good suggestions, in
particular, making the proof of Thm.~\ref{thm.EST_structure} much
clearer, shortening the proofs of Thms.~\ref{thm.bijection}
and~\ref{thm.hook_slice_generate}, and strengthening
Thm.~\ref{thm.stronger_bound}.  Moreover, we wish to thank Dwight
Duffus for helpful conversations.
\end{acknowledgement*}


\bibliographystyle{plain}


\end{document}